%% file: main.tex
\input{preamble/packages}

\input{preamble/macros}

\usepackage[urldate=long, backend=biber, style = alphabetic, sorting = nyt]{biblatex}
\title[The Hasse principle for block-diagonal forms]{On the Hasse principle for a block-diagonal form and a form of lower degree}

\author{Anna Theorin Johansson}
\address{Department of Mathematical Sciences, Chalmers University of Technology and the University of Gothenburg, SE-412 96 Gothenburg, Sweden}
\subjclass{11D72 (Primary) 11D45, 11P55, 14G05 (Secondary)}
\email{annatheo@chalmers.se} 
\date{}

\begin{document}

\begin{abstract}
    We investigate the Hasse principle for systems consisting of one form of degree $k$ which decomposes as a sum of $n$-ary forms, and one general form of degree $d < k$. By exploiting the block structure of the first form before applying Weyl differencing, we refine the method of Brandes and Parsell \cite{brandes_hasse_2021} in this setting; in particular, for a sum of binary quartic forms and a quadratic form, the requirement $s > 516$ is relaxed to $s > 96$, which also improves on the bound $s > 100$ available for general forms.
\end{abstract}

\maketitle

\section{Introduction}

\subsection{Motivation}
Suppose we are given a pair of forms $F, G \in \Z[x_1, \dots, x_s]$ of degrees $k$ and $d$, respectively. We are interested in the number $N_{F, G}(X)$ of integer points $\xu \in [-X, X]^s$ satisfying $F(\xu) = G(\xu) = 0$. For $s$ sufficiently large and for suitably non-singular forms, we expect this to be of the form 
\begin{equation}\label{eq:asymp_temp}
    N_{F, G}(X) = X^{s-k-d} \of{\Constant + O\of{X^{-\nu}}},
\end{equation}
where $\Constant \geq 0$ is a constant and $\nu > 0$ is a small number, both dependent at most on the forms $F$ and $G$. Let $\mathfrak{s}_0 = \mathfrak{s}_0(k, d)$ denote the smallest integer such that \cref{eq:asymp_temp} holds for all $s > \mathfrak{s}_0$ and all suitably non-singular forms $F$ and $G$ in the class under consideration; note that $\mathfrak{s}_0$ depends on this class. The objective is to find upper bounds for $\mathfrak{s}_0$, and in particular to understand how these improve when one of the forms has additional structure.

\subsection{Previous work}
If both $F$ and $G$ are diagonal, Wooley's efficient congruencing method gives
\begin{equation*}
    \mathfrak{s}_0(k, d) \leq k(k+1)
\end{equation*} 
for any two distinct degrees $k > d$, with better bounds if $d$ is much smaller than $k$ \cite[\S 14]{wooley_nested_2019}. In the case $k = 3$ and $d = 2$, one can achieve $\mathfrak{s}_0(3, 2) \leq \frac{32}{3}$, so that $s = 11$ suffices \cite{wooley_rational_2015}. The underlying mean value estimate was recently sharpened to an essentially optimal form \cite[Theorem 1.1]{wooley_main_2026}, and as remarked in \S 7 of the same paper, this improvement should allow for a further reduction in the number of variables required in this case. 

On the other hand, without any assumption of diagonality, one must resort to the work of Browning and Heath-Brown \cite{browning_forms_2017}, which treats systems of forms of differing degrees in full generality. Their bound grows exponentially in both $k$ and $d$; to be precise, \cite[Corollary 1.8]{browning_forms_2017} gives
\begin{equation}\label{eq:s_0_BHB}
    \mathfrak{s}_0(k, d) \leq (2+d)(k-1)2^{k-1} + 2^{d-1} d. 
\end{equation}
With $k = 3$ and $d = 2$, this corresponds to $\mathfrak{s}_0(3, 2) \leq 36$, which was sharpened by Browning, Dietmann and Heath-Brown \cite[Theorem 1.3]{browning_rational_2015} to
\begin{equation}\label{eq:s_0_32_BDHB}
    \mathfrak{s}_0(3, 2) \leq 28.
\end{equation}

In the case where $F$ is diagonal but no specific shape restriction is put on $G$, Brandes and Parsell \cite{brandes_hasse_2021} used the diagonal structure of $F$ to replace the exponential growth in $k$ in \cref{eq:s_0_BHB} by a quadratic one. With $F$ diagonal of degree $k$ and $G$ of degree $d < k$, \cite[Theorem 1.1]{brandes_hasse_2021} gives
\begin{equation}\label{eq:s_0_kd_BP}
    \mathfrak{s}_0(k, d) \leq 
    \begin{cases}
        2^k(d+1) & \quad \text{if } d+1 \leq k \leq d+4, \\
        2^d(26 + 32d) & \quad \text{if } k = d + 5, \\
        2^d\left[(2d+1)k^2 - L_d(k) \right] & \quad \text{if } k \geq d + 6,
    \end{cases} 
\end{equation}
where $L_d(k)$ is an explicit function of $d$ and $k$; see the original paper and the forthcoming corrigendum for its precise form. In the case of $k = 3$ and $d = 2$, this gives
\begin{equation}\label{eq:s_0_32_BP}
    \mathfrak{s}_0(3, 2) \leq 24.
\end{equation}
In the case $d = k-1$, this was recently improved to $\mathfrak{s}_0(k, k-1) \leq 2^{k-1} (2k-1)$ in \cite{theorin_johansson_hasse_2026}, so that in particular
\begin{equation}\label{eq:s_0_32_ATJ}
    \mathfrak{s}_0(3, 2) \leq 20.
\end{equation}

A natural next question is what happens if $F$ is not diagonal, but still has some additive structure. To make this precise, let $n \in \N$ with $n \mid s$, write $s = nu$, and for a vector $\xu \in \Z^s$, let $\xu = (\x^{(1)}, \dots, \x^{(u)})$ with each $\x^{(i)} \in \Z^n$. We say that a form $F \in \Z[x_1, \dots, x_s]$ of degree $k$ \emph{decomposes as a sum of $n$-ary forms} if, after a permutation of the variables,
\begin{equation}\label{eq:F_psi}
    F(\xu) = \psi_1(\x^{(1)}) + \dots + \psi_u(\x^{(u)}),
\end{equation}
for some forms $\psi_1, \dots, \psi_u \in \Z[x_1, \dots, x_n]$ of degree $k$. For $n = 1$, this is the definition of a diagonal form. Such forms allow for separation of variables just as in the diagonal case, but leave some freedom in the shape of the summands. For $n = 2$, Weyl-type estimates and mean values for exponential sums associated with binary forms were developed by Wooley \cite{wooley_weyls_1999}. In the setting of the present paper, Brandes and Parsell \cite{brandes_hasse_2021} showed that their methods extend to forms $F$ which decompose as a sum of $n$-ary forms; we recall this in \cref{thm:bp_thm} below.

In order to state our result, we introduce the local solution densities associated with the system $F(\xu) = G(\xu) = 0$. For $q \in \N$, let 
\begin{equation*}
    \Gamma(q) \eqdef \# \set{\xu \in (\Z/q\Z)^s : F(\xu) \equiv G(\xu) \equiv 0 \Mod q},
\end{equation*}
and for each prime $p$, set
\begin{equation*}
    \chi_p \eqdef \lim_{h \to \infty} \of{p^{h}}^{2-s} \Gamma(p^h).
\end{equation*}
Next, define for $(\gamma, \delta) \in \R^2$ the integral 
\begin{equation}\label{eq:J_def}
    J(\gamma, \delta) \eqdef \int_{[-1, 1]^s} e \of{\gamma F(\bm{\xi}) + \delta G(\bm{\xi})} \diff \bm{\xi},
\end{equation}
and let
\begin{equation*}
    \chi_{\infty} \eqdef \lim_{R \to \infty} \int_{[-R, R]^2} J(\gamma, \delta) \diff \gamma \diff \delta.
\end{equation*}
Define
\begin{equation*}
    \Constant_{F, G} \eqdef \chi_{\infty} \prod_p \chi_p;
\end{equation*}
we will show in \cref{sec:major} that these limits do indeed exist. Heuristically, $\chi_p$ can be interpreted as the density of solutions of $F(\xu) = G(\xu) = 0$ in $\Z_p^s$, and $\chi_{\infty}$ in turn as the density of real solutions in the box $[-1, 1]^s$.

\begin{definition}\label{def:sigma}
    For $j \geq 1$, let 
    \begin{align*}
        \tau_0(j) \eqdef \min\set{2^{j-1}, \frac{1}{2} j(j-1) + \floor{\sqrt{2j+2}}}, && \sigma_0(j) \eqdef \begin{cases}
            2^{j-1} & \quad \text{if } 1 \leq j \leq 5, \\
            j(j-1) & \quad \text{if } j \geq 6.
        \end{cases}
    \end{align*}
\end{definition}

\begin{theorem}[{\cite[Theorem 1.7]{brandes_hasse_2021}}]\label{thm:bp_thm}
    Let $k > d \geq 2$ and let $n \in \N$ with $n \mid s$. Suppose that $F \in \Z[x_1, \dots, x_s]$ is a non-singular form of degree $k$ which decomposes as a sum of $n$-ary forms, and that $G \in \Z[x_1, \dots, x_s]$ is a non-singular form of degree $d$. Let $\sigma_0$ be as in \cref{def:sigma}, and suppose that $s > s^{\mathrm{BP}}$, where
    \begin{equation}
        s^{\mathrm{BP}} = s^{\mathrm{BP}}(n, k, d) \eqdef 2^{d-1} d + 2^{d+1} (d + 2) n \sigma_0(k). \label{eq:req_old_1}
    \end{equation}
    Then there is a $\nu > 0$ such that
    \begin{equation}\label{eq:asymp}
        N_{F, G}(X) = X^{s-k-d} \of{\Constant_{F, G} + O\of{X^{-\nu}}}.
    \end{equation}
\end{theorem}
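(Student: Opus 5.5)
The plan is the Hardy--Littlewood circle method in two variables, following the strategy of \cite{brandes_hasse_2021}. Write $N_{F,G}(X)=\int_{[0,1)^2} T(\alpha,\beta)\diff\alpha\diff\beta$ with
\[
T(\alpha,\beta)=\sum_{\xu\in[-X,X]^s}e(\alpha F(\xu)+\beta G(\xu)).
\]
The steps, in order, are: a Weyl-type inequality for $T$; a decomposition of $[0,1)^2$ into major and minor arcs; a minor-arc bound via a pointwise/mean-value interpolation; and a standard major-arc treatment giving $\Constant_{F,G}$.

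First, the Weyl-type inequality. Apply Weyl differencing $d-1$ times in a way that removes $G$ while keeping the block structure of $F$ visible. After $d-1$ differencing steps, Cauchy--Schwarz yields an upper bound for $|T|^{2^{d-1}}$. It is an average over difference vectors $\mathbf{h}_1,\dots,\mathbf{h}_{d-1}$ of sums in which $G$ contributes only a linear phase, while $F$ becomes its $(d-1)$-fold difference polynomial. One more differencing, or a dichotomy in the Browning--Heath-Brown style, handles the linear part of $G$. This produces the non-singularity alternative: either $|T|$ is small, or $\beta$ has a good rational approximation, or many $\mathbf{h}$ lie on a variety of dimension controlled by the singular locus of $G$. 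The latter is ruled out for non-singular $G$, and this accounts for the term $2^{d-1}d$ in \cref{eq:req_old_1}.

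Second, the minor arcs. On the set where $\beta$ is well approximable but $\alpha$ is not, fix the differences. Because $F$ decomposes as in \cref{eq:F_psi}, the differenced $F$-sum still factors over the $u$ blocks into sums over $\x^{(i)}\in\Z^n$ of $e(\alpha\,\Delta\psi_i)$. Apply H\"older across blocks, then bound each block by a mean value for $n$-ary forms of degree $k$ of Vinogradov/Hua type. Counting $n$ variables per block, a Hua-type mean value gives an exponent $n\cdot 2^{k}$, or $n\,\sigma_0(k)$ via efficient congruencing for $k\ge6$. Interpolate a Weyl bound of the form $\sup|T|\ll X^{s-\delta}$ against this mean value, losing the factor $2^{d+1}(d+2)$ from the differencing in $\beta$ and the dimension count. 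Summing over the $\mathbf{h}$ then shows the minor-arc contribution is $O(X^{s-k-d-\nu})$ once $s>s^{\mathrm{BP}}$.

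Third, the major arcs. These are the standard neighbourhoods of $(a/q,b/q)$ with $q\le X^{\theta}$. On them, $T$ is approximated by $q^{-s}S(q,a,b)\,X^s J(X^k\gamma,X^d\delta)$, with $J$ as in \cref{eq:J_def}. Convergence of the singular series and of the singular integral follows from the decay of $S(q,a,b)$ and of $J$, which in turn comes from the same Weyl bounds. The Euler product then identifies the main term as $\Constant_{F,G}$.

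I expect the main obstacle to be the mixed minor arcs, where $\alpha$ is poorly approximable but $\beta$ is near a rational. There one must simultaneously exploit the additive structure of $F$ after differencing, so that block mean values still apply to the shifted polynomials $\psi_i(\x+\mathbf{h})-\psi_i(\x)$, and retain enough uniformity in $\mathbf{h}$. My first approach would be to treat the differenced block sums by the large sieve or by a uniform Hua bound valid for all polynomials with fixed leading form $\psi_i$. This is legitimate because differencing in the $G$-direction does not alter the top-degree part of $F$.
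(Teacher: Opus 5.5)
Your overall architecture is the right one: the Browning--Heath-Brown dichotomy, pruning, and the standard major-arc treatment. The gap is the step that carries the whole argument, a bound for $T(\alpha,\beta)$ when $\alpha\in\mi(\theta)$. You do not supply it, and the mechanism you sketch would not work.

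\emph{The dichotomy does not cover the $\alpha$-minor arcs.} The dichotomy of your first step (\cref{lemma:subdivision}) is only available for $\alpha\in\Ma(\theta)$. After $d-1$ differencing steps the $F$-part is still a phase of degree $k-d+1\ge 2$, and it is the approximation $a/q$ to $\alpha$ that lets one remove it. So on $\mi(\theta)$ you need an estimate valid for \emph{every} $\beta$. Restricting to the region where ``$\beta$ is well approximable'' leaves the region with both $\alpha$ and $\beta$ badly approximable uncovered.

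\emph{Differencing changes the top-degree part of $F$.} Your remark that differencing does not alter it is false. $\Delta_{\h^{(1)}}\cdots\Delta_{\h^{(d)}}\psi_i$ is a polynomial of degree $k-d$ in $n$ variables, and its leading form depends multilinearly on the $\h$'s and vanishes on a subvariety. Consequently:
\begin{itemize}
\item neither a degree-$k$ mean value for $\psi_i$ nor a ``uniform Hua bound with fixed leading form'' applies to it;
\item the leading coefficient of a one-dimensional slice is a multilinear form in $\h$, not a product $h_1\cdots h_d$, so no divisor-bound argument is available;
\item with only $d-1$ steps, the residual linear $G$-phase couples all blocks through $\h$, so the sum over $\h$ does not factorise over blocks.
\end{itemize}
You never derive the Weyl bound $\sup|T|\ll X^{s-\delta}$ you propose to interpolate, and the factors $n\sigma_0(k)$ and $2^{d+1}(d+2)$ are read off the target rather than derived.

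\emph{The missing idea.} The paper cites this theorem from Brandes--Parsell. Its own argument proves the stronger \cref{thm:main}, which implies the statement since $s^{\mathrm{BP}}\ge s_*,s_2,s_3$. The key is to exploit the block structure \emph{before} differencing, as in the proof of \cref{lemma:ch}:
\begin{itemize}
\item Make a unimodular change of variables in each block so that $\psi_i$ has a non-zero coefficient of $x_1^k$. Return to a box at the cost of $X^{\eps}$ and a linear twist via \cite[Lemma 5.2]{brandes_forms_2014}.
\item Fix the other $n-1$ variables of every block.
\item Difference $d$ times, not $d-1$, in the remaining $u$ one-dimensional variables only.
\end{itemize}
Then $G$ and the twist are killed outright, so the bound is uniform in $\beta$. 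The $\h$-sum factorises over blocks. Each factor is a sum of one-dimensional Weyl sums with phase $\alpha h_1\cdots h_d\,p(z)$, where $p$ has degree $k-d$, a fixed non-zero leading coefficient, and satisfies \cref{hyp} uniformly. \cref{lemma:2.3} then gives the supremum bound, and H\"older with \cref{thm:2.1} gives the mean-value bound. These feed into \cref{lemma:minor_mi} and the subdivision argument.
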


\subsection{Results}
The goal of this paper is to streamline the proof of the above to improve the requirement \cref{eq:req_old_1}. The following is our main result.
\begin{theorem}\label{thm:main}
    Let $k > d \geq 2$ and let $n \in \N$ with $n \mid s$. Suppose that $F \in \Z[x_1, \dots, x_s]$ is a non-singular form of degree $k$ which decomposes as a sum of $n$-ary forms, and that $G \in \Z[x_1, \dots, x_s]$ is a non-singular form of degree $d$. Let $\tau_0$ and $\sigma_0$ be as in \cref{def:sigma}, and suppose that $s > s_*$, where
    \begin{align}
        s_* = s_*(n, k, d) &\eqdef 
        \begin{cases}
            (2d+1) 2^d n & \quad \text{if $k - d = 1$}, \\
            2^{d+1} n \of{\tau_0(k-d) + d \sigma_0(k-d)} & \quad \text{if } k - d \geq 2.
        \end{cases}
        \label{eq:req_new_1} 
    \end{align}
    Then there is a $\nu > 0$ such that the asymptotic formula \cref{eq:asymp} holds, with the constant $\Constant_{F, G}$ a product of local solution densities. 
\end{theorem}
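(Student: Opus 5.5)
We use the circle method. For $(\alpha,\beta)\in[0,1)^2$ set
\[
T(\alpha,\beta)=\sum_{\xu\in[-X,X]^s\cap\Z^s} e\bigl(\alpha F(\xu)+\beta G(\xu)\bigr),
\qquad\text{so that}\qquad
N_{F,G}(X)=\int_0^1\!\!\int_0^1 T(\alpha,\beta)\,\diff\alpha\,\diff\beta .
\]
We dissect $[0,1)^2$ into major arcs $\mathfrak{M}$, where both $\alpha$ and $\beta$ have simultaneous rational approximations $a/q$, $b/q$ with $q\le X^{\theta}$ and $|\alpha-a/q|\le X^{\theta-k}$, $|\beta-b/q|\le X^{\theta-d}$, and minor arcs $\mathfrak{m}$. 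On $\mathfrak{M}$ we follow the treatment in \cite{brandes_hasse_2021} (and \cref{sec:major}). We approximate $T$ by $q^{-s}S(q,a,b)X^sJ(X^k\gamma,X^d\delta)$. The singular series and singular integral converge absolutely once $s$ exceeds the (smaller) thresholds implied by non-singularity of $F$ and $G$. This produces $\Constant_{F,G}X^{s-k-d}$ with a power saving. The whole difficulty is therefore the minor arc bound $\int_{\mathfrak m}|T|\ll X^{s-k-d-\nu}$.

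The new input is to exploit the block structure before differencing. We write the Weyl differencing with respect to $G$ $d-1$ times, in the style of Browning--Heath-Brown / Brandes--Parsell, applied via Cauchy--Schwarz in the full $s$ variables. After $d-1$ differencing steps the $G$-part becomes linear in $\xu$ for fixed difference vectors $\mathbf h_1,\dots,\mathbf h_{d-1}$. The $F$-part becomes $\Delta_{\mathbf h_1,\dots,\mathbf h_{d-1}}F$, which still decomposes blockwise as $\sum_i \Delta\psi_i(\x^{(i)})$ of degree $k-d+1$ in each block. A further differencing step removes $\beta$ completely, using linearity and summation over the linear phase. This gives a bound of the shape
\[
|T|^{2^d}\ll X^{(2^d-d-1)s}\sum_{\mathbf h}\prod_{i=1}^u \bigl|T_i(\alpha;\mathbf h)\bigr|,
\]
with $T_i$ an exponential sum over an $n$-ary form of degree $k-d$ in $\x^{(i)}$, possibly with one extra linear twist. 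Hölder over blocks and $\mathbf h$ then reduces matters to two inputs. The first is a Weyl-type pointwise bound for $n$-ary forms of degree $j=k-d$, which saves $X^{-\sigma}$ with $\sigma^{-1}\approx 2^{j-1}$ (or Vinogradov-type $j(j-1)$ for large $j$), accounting for $\tau_0$ and $\sigma_0$. The second is a mean value over $\alpha$ of $\prod_i|T_i|$ of the strength of the $2\sigma_0(j)$-th moment per variable, giving $X^{2n\sigma_0 - j}$ roughly. The $\beta$-integral is handled by the counting of $\mathbf h$ for which the differenced $G$-linear form is singular. This is controlled by non-singularity of $G$ through the Birch-type singular locus bound, costing $2^{d-1}$-type factors absorbed into $2^{d+1}n\cdot d$.

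The bookkeeping then runs as follows. On the part of $\mathfrak m$ where $\alpha$ is minor, we use the pointwise Weyl bound on $\tau_0(k-d)$ blocks' worth of variables (per difference) and the mean value on the remaining $d\sigma_0(k-d)$ worth. The factor $2^{d+1}n$ reflects the $2^d$-power from differencing and the $n$ variables per block. Where $\alpha$ is major but $\beta$ is minor, we instead difference with respect to $F$ only through $G$'s Weyl inequality. Alternatively we use pruning with Brandes--Parsell's argument, whose requirement is smaller here. In the case $k-d=1$ the final sums are linear in each block after differencing, so they are geometric sums. The mean value is then trivial, yielding $(2d+1)2^dn$.

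The main obstacle I expect is the mean value step after differencing. The differenced $n$-ary forms $\Delta_{\mathbf h}\psi_i$ depend on $\mathbf h$, so we need the required $2\sigma_0(k-d)$-th moment estimate uniformly in $\mathbf h$, outside a small exceptional set where the differenced form degenerates. My first attempt would be to fix $\mathbf h$, note that $\Delta_{\mathbf h}\psi_i$ is an $n$-ary form of degree $k-d$ whose leading part is non-degenerate for generic $\mathbf h$ by non-singularity of $\psi_i$, and then apply the $n$-ary Vinogradov/Hua bounds blockwise. We would count the exceptional $\mathbf h$ by an algebraic dimension bound and treat them trivially.
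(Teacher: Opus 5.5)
There is a genuine gap. You say you exploit the block structure before differencing, but you actually difference in all $s$ variables, and that is where the argument breaks down. Each block then leaves an $n$-ary sum $T_i(\alpha;\h_i)$ with $\h_i\in\Z^{dn}$. Its phase $\alpha\,\Delta_{\h_{i,1}}\cdots\Delta_{\h_{i,d}}\psi_i(\x^{(i)})$ has top-degree part a constant multiple of $\Psi_i(\h_{i,1},\dots,\h_{i,d},\x^{(i)},\dots,\x^{(i)})$, where $\Psi_i$ is the symmetric multilinear form of $\psi_i$. These top-degree coefficients are multilinear forms in $\h_i$ of size up to $X^d$, and the lower-order coefficients are not multiples of them. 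Neither of your two inputs is then available.

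First, there is no pointwise Weyl bound for $T_i(\alpha;\h_i)$ in terms of $\alpha\in\mi(\theta)$. What matters is the approximation to $c(\h_i)\alpha$, where $c(\h_i)$ is the leading coefficient in a fixed direction. For individual $\h_i$ there may be no saving at all: take $\alpha=a/q$ and $\h_i\equiv 0 \Mod q$. The saving $X^{\theta/\sigma}$ exists only on average over the multipliers, as in \cref{lemma:2.2,lemma:2.3}. To transfer it you would need $\#\set{\h_i\in[-2CX,2CX]^{dn} : c(\h_i)=m}\ll X^{dn-d+\eps}$ for $m\neq 0$, in place of the divisor bound for $h_1\cdots h_d$. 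You do not provide such a bound.

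Second, \cref{thm:2.1}(i) does not give the $2\tau$th moment. \cref{hyp} requires the top-degree coefficients to be $O(1)$. You also cannot dilate $c(\h_i)$ away by $\alpha\mapsto c(\h_i)\alpha$, because the lower-order terms do not share that factor. So ``apply the $n$-ary Vinogradov/Hua bounds blockwise'' needs a uniformity you do not establish. If both per-block estimates held, your route would give the same threshold, so the substance of the proof is exactly what is missing.

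The missing idea is to reduce to one variable per block \emph{before} differencing. Make a unimodular change of variables in each block so that $\psi_i$ has a non-zero coefficient of $x_1^k$. Then fix the other $n-1$ variables $\y^{(i)}$ in every block, at trivial cost $X^{(n-1)u}$, and difference $d$ times only in the remaining $u=s/n$ variables. This gives $\Delta_{h^{(1)}}\cdots\Delta_{h^{(d)}}\psi_i(z;\y^{(i)})=h^{(1)}\cdots h^{(d)}p_{i,\h}(z;\y^{(i)})$, where $p_{i,\h}$ has degree $k-d$, a leading coefficient independent of $\h$, and satisfies \cref{hyp}. Then \cref{lemma:2.3} gives the averaged Weyl bound. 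The identity $\oint\abs{f(H\alpha)}^{2\tau}\diff\alpha=\oint\abs{f(\alpha)}^{2\tau}\diff\alpha$ for integers $H\neq 0$, together with \cref{thm:2.1}(i), gives the mean value.

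Your bookkeeping also needs correcting.
\begin{itemize}
\item Once $G$ has been differenced away, the bound is independent of $\beta$. The $\beta$-integral is therefore trivial, and no Birch-type singular-locus counting enters here.
\item The threshold comes from writing $t=s/(2^dn)=2\tau+v$. The mean value is applied to the $2\tau$ part, which only yields $X^{s-k+d}$. The supremum bound with $\theta=1$ is applied to the remaining $v$, which must save more than $X^{2d}$, i.e.\ $v>2d\sigma$. This is the reverse of the allocation you describe.
\item For $k-d=1$, the gain comes from dropping the mean value altogether: $\sup_{\mi(1)}\abs{T}\ll X^{s-t+\eps}$ suffices once $t>2d+1$.
\item The conditions coming from the $(\alpha,\beta)$ split via \cite[Lemma 6.1]{browning_forms_2017}, and from the convergence of the singular series and integral, have to be checked against $s>s_*$ rather than asserted.
\end{itemize}
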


\begin{remark}\label{rmk:s2_s3}
    The proof of \cref{thm:main} also requires $s > s_2$ and $s > s_3$, where
    \begin{align}
        s_2 = s_2(n, k, d) &\eqdef 2^{d-1} d + 2^{d} n (d + 2) \sigma_0(k-d), \label{eq:req_new_2} \\
        s_3 = s_3(n, k, d) &\eqdef 2^{d} (d-1) + 3 \cdot 2^{d} n\sigma_0(k-d).\label{eq:req_new_3}
    \end{align}
    Both conditions are implied by $s > s_*$. Writing $s_1 \eqdef s_*$, the numbering of $s_1$, $s_2$ and $s_3$ follows the order in which the corresponding conditions appear in \cite{browning_forms_2017}.
\end{remark}

\begin{remark}\label{rmk:range}
    If $n = 1$, the form $F$ is diagonal, and \cref{thm:main} recovers the bound of \cite[Theorem 1.1]{brandes_hasse_2021} (as corrected in the corrigendum \cite{brandes_parsell_corrigendum}) if $k - d \geq 2$, and that of \cite{theorin_johansson_hasse_2026} if $k = d+1$; the new cases are therefore those with $n \geq 2$. In these cases, \cref{thm:main} always improves on \cref{thm:bp_thm}. Note however that the bound \cref{eq:s_0_BHB} for general forms does not depend on $n$, whereas $s_*$ grows linearly in $n$, which limits the chance of improvement for large $n$.
\end{remark}

In particular, for the case of one sum of binary quartic forms and one quadratic form, the requirement is improved as follows.
\begin{corollary}
    Let $F, G \in \Z[x_1, \dots, x_s]$ be a pair of non-singular forms, where $F$ is quartic and decomposes as a sum of binary forms, and $G$ is quadratic. Suppose that
    \begin{equation*}
        s > 96.
    \end{equation*}
    Then there is a $\nu > 0$ such that
    \begin{equation*}
        N_{F, G}(X) = X^{s-6} \of{\Constant_{F, G} + O\of{X^{-\nu}}},
    \end{equation*}
    and $\Constant_{F, G} > 0$ whenever the system $F(\xu) = G(\xu) = 0$ has a non-singular solution in $\R^s$ and in $\Q_p^s$ for every prime $p$. 
\end{corollary}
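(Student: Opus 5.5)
The corollary is the special case $n = 2$, $k = 4$, $d = 2$ of \cref{thm:main}, combined with the standard positivity of the singular series and singular integral. First I compute $s_*$. Here $k - d = 2 \geq 2$, so the second branch of \cref{eq:req_new_1} applies. By \cref{def:sigma}, $\sigma_0(2) = 2^{1} = 2$, and
\[
\tau_0(2) = \min\set{2, 1 + \floor{\sqrt{6}}} = \min\set{2, 3} = 2 .
\]
Hence
\[
s_*(2,4,2) = 2^{3}\cdot 2\cdot\bigl(2 + 2\cdot 2\bigr) = 96 .
\]
As \cref{rmk:s2_s3} notes, the auxiliary conditions are implied by $s > s_*$. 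One can also check them directly: $s_2 = 4 + 4\cdot 2\cdot 4\cdot 2 = 68$ and $s_3 = 4 + 3\cdot 4\cdot 2\cdot 2 = 52$. So for $s > 96$, \cref{thm:main} applies. Since $k + d = 6$, it yields
\[
N_{F,G}(X) = X^{s-6}\bigl(\Constant_{F,G} + O(X^{-\nu})\bigr),
\]
with $\Constant_{F,G} = \chi_\infty\prod_p\chi_p$.

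It remains to show $\Constant_{F,G} > 0$ under the local solubility hypothesis. I would argue in three steps.

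\emph{Archimedean factor.} A non-singular real solution of $F = G = 0$ can be rescaled into $(-1,1)^s$ by homogeneity. Near it the real variety is a smooth manifold of codimension $2$. The Fourier-inversion representation of $\chi_\infty$ as the limit of
\[
\int_{[-R,R]^2} J(\gamma,\delta)\diff\gamma\diff\delta
\]
identifies $\chi_\infty$ with a singular-integral density, which is then strictly positive. The existence of this limit is taken from \cref{sec:major}, which uses the bound on $J$ implied by $s > s_*$.

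\emph{Each $\chi_p$.} A non-singular $p$-adic solution can be scaled into $\Z_p^s$. Hensel's lemma then gives $\Gamma(p^h) \gg p^{h(s-2)}$ for all large $h$, so $\chi_p > 0$.

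\emph{The infinite product.} Here I would use the bound on the complete exponential sums established in \cref{sec:major}, under $s > s_*$, which gives $\chi_p = 1 + O(p^{-1-\eta})$ for some $\eta > 0$. The product $\prod_p\chi_p$ therefore converges absolutely, and since no factor vanishes it is positive.

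The only potential subtlety is this last step: ensuring that the singular series bounds proved for $\Constant_{F,G}$ really give absolute convergence, not merely convergence of the limit. I expect this to follow directly from the major arc treatment already needed for \cref{thm:main}, so the corollary is essentially the arithmetic computation of $s_*$ above.
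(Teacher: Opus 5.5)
Your proposal is correct and follows the paper's route. The corollary is \cref{thm:main} with $(n,k,d)=(2,4,2)$, where $\tau_0(2)=\sigma_0(2)=2$ gives $s_*=96$. Positivity of $\Constant_{F,G}$ uses the same ingredients as \cref{sec:proof}: Hensel's lemma for each $\chi_p$, control of the tail of the Euler product, and a non-singular real point for $\chi_\infty$ (the paper cites Schmidt here instead of your Fourier-inversion sketch). The subtlety you flag is harmless: interpolating between the two bounds in \cref{lemma:S} and summing over $a,b$ gives $q^{-s}\sum_{a,b}\abs{S(q;a,b)} \ll q^{-1-\lambda}$ for some $\lambda>0$, which is exactly what \cref{eq:kappa_n_2} guarantees, and this yields both absolute convergence and $\chi_p = 1+O(p^{-1-\lambda})$.
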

For comparison, in this specific situation, \cref{thm:bp_thm} requires $s > 516$ and the bound \cref{eq:s_0_BHB} for general forms is $s > 100$.

\subsection{Main idea of the proof}
The main improvement arises from exploiting the block structure of $F$ before applying Weyl differencing. After a unimodular change of variables in each $n$-variable block, we may arrange for each summand to have a non-zero coefficient of a pure $k$th power, before we fix the remaining $n-1$ variables in every block. This reduces the relevant inner exponential sum to a sum in $u = s/n$ one-dimensional variables.

To these variables, we apply $d$ Weyl differencing steps, so that, since $G$ has degree $d$, its contribution vanishes upon taking absolute values. The contribution from $F$, on the other hand, separates into one-dimensional exponential sums whose phases are polynomials of degree $k-d$. This allows us to use the sharper one-dimensional estimates involving $\tau_0(k-d)$ and $\sigma_0(k-d)$. In comparison with \cite[Theorem 1.7]{brandes_hasse_2021}, this replaces the occurrences of $\sigma_0(k)$ with $\sigma_0(k-d)$ and, in $s_2$ and $s_3$, each $2^{d+1}$ with $2^d$.

\subsection{Outline of the paper}
In \cref{sec:weyl}, we define the one-dimensional major arcs and collect the one-dimensional Weyl and mean value estimates needed. In \cref{sec:minor}, we express the counting function $N_{F, G}(X)$ as an integral of an exponential sum $T(\alpha, \beta)$ and treat the minor arc contribution, first for the one-dimensional minor arcs in $\alpha$, where the block structure of $F$ is used, and then for the two-dimensional subdivision in $(\alpha, \beta)$. In \cref{sec:major}, we evaluate the remaining major arc contribution and establish the convergence of the singular series and singular integral, so that we can complete the proof of \cref{thm:main} in the final section. 

\subsection{Notation}
We will use the following conventions throughout:
\begin{itemize}
    \item For a vector $\x \in \R^m$, define $\abs{\x} \eqdef \norm{\x}_{\infty} = \max_{i \in \set{1, \dots, m}} \abs{x_i}$.
    \item For $\alpha \in \R$, let $e(\alpha) = e^{2 \pi i \alpha}$ and $e_q(\alpha) = e(\alpha/q)$ whenever $q \in \N$.
    \item Let $\T = \R/\Z$, and write $\operatorname{vol}$ for the Lebesgue measure on $\T$ and on $\T^2$.
    \item For $\beta\in \R$, write $\norm{\beta} \eqdef \min_{b \in \Z} \abs{\beta - b}$ for the distance from $\beta$ to the nearest integer. Since $\norm{\beta}$ depends only on the residue class of $\beta$ modulo $1$, this also defines $\norm{\alpha}$ for $\alpha \in \T$, which allows us to use $\norm{\cdot}$ on both $\R$ and $\T$.
    \item If $f \colon \T^m\to \C$ is integrable, define
    \begin{equation*}
        \oint f(\bm{\xi}) \diff \bm{\xi} \eqdef \int_{\T^m} f(\bm{\xi}) \diff \bm{\xi},
    \end{equation*}
    where the dimension $m$ is always clear from the context.
    \item When $\eps$ appears in a statement, it is asserted that the statement holds for every $\eps > 0$, with implied constants that may depend on $\eps$.
    \item Whenever $X$ appears in a statement, it is asserted that the statement holds for all sufficiently large $X$, that is, for all $X \geq X_0$, where $X_0$ may depend on every parameter other than $X$ (including $\eps$).
    \item The implied constants in the Landau and Vinogradov notations are always allowed to depend on every parameter but $X$, unless otherwise specified.
\end{itemize}

\subsection{Acknowledgements}
The author is grateful to Simon Rydin Myerson for proposing this problem, and to Julia Brandes and Simon Rydin Myerson for helpful comments on an earlier draft.

\section{Weyl estimates}\label{sec:weyl}

For $j, q \in \N$, $a \in \Z$ and $\theta \in (0, 1]$, define
\begin{equation*}
    \Ma_j(q, a; \theta) \eqdef \set{\alpha \in \T: \abs{\alpha - \frac{a}{q}} \leq q^{-1} X^{-j + \theta}}.
\end{equation*}

\begin{definition}\label{def:major_arcs}
    For $j \in \N$ and $\theta \in (0, 1]$, let $\Ma_j(\theta)$ be the union of the arcs $\Ma_j(q, a; \theta)$ with $(a, q) = 1$ and $1 \leq a \leq q \leq X^{\theta}$, and set $\mi_j(\theta) \eqdef \T \setminus \Ma_j(\theta)$. For ease of notation, we write $\Ma(\theta) \eqdef \Ma_k(\theta)$ and $\mi(\theta) \eqdef \mi_k(\theta)$, where $k$ is the degree of $F$.
\end{definition}

Since $k \geq 3$, the arcs comprising $\Ma(\theta)$ are pairwise disjoint for all $\theta \in (0, 1]$, and
\begin{equation}\label{eq:major_volume}
    \operatorname{vol} \Ma(\theta) \ll \sum_{q \leq X^{\theta}} \sum_{\substack{a = 1 \\ (a, q) = 1}}^q \frac{X^{-k + \theta}}{q} \ll X^{-k + 2 \theta}. 
\end{equation}

Recall the definitions of $\tau_0(j)$ and $\sigma_0(j)$ from \cref{def:sigma}. Let $j, m \in \N$, let $\varphi \in \Z[x_1, \dots, x_m]$ be a polynomial of degree $j$ in $m$ variables and let $\mathcal{D} \subseteq \R^m$ be a bounded convex set.
\begin{definition}
    Define
    \begin{equation*}
        f_{j, m}(\alpha) = f_{j, m}(\alpha; \varphi, \mathcal{D}) \eqdef \sum_{\x \in \mathcal{D} \cap \Z^m} e \of{\alpha \varphi(\x)}.
    \end{equation*}
    When $\varphi$ and $\mathcal{D}$ are clear from context, we simply write $f_{j, m}(\alpha)$. 
\end{definition}

The following assumption on the coefficients of $\varphi$ will be necessary in what follows. 
\begin{hypothesis}\label{hyp}
    Suppose that $\varphi$ has the property that for each $1 \leq i \leq j$, the degree-$i$ part of $\varphi$ has coefficients of size $\ll X^{j-i}$. 
\end{hypothesis}

\begin{theorem}\label{thm:2.1}
    Let $\varphi \in \Z[x_1, \dots, x_m]$ be of degree $j \geq 1$, and suppose that $\varphi$ satisfies \cref{hyp}. Let $\mathcal{D}$ be the image of $[-X, X]^m$ under a unimodular linear transform, with the property that $\max\set{\abs{\x}: \x \in \mathcal{D}} \leq \tau X$ for some constant $\tau$. 
    \begin{enumerate}\label{={(\roman*)}}
        \item Suppose that $u \geq \tau_0(j)$. Then
        \begin{equation*}
            \oint \abs{f_{j, m}(\alpha)}^{2u} \diff \alpha \ll X^{2 m u - j + \eps}.
        \end{equation*}
        \item Let $\alpha \in \T$ and let $q \in \N$ satisfy $\norm{\alpha q} \leq q^{-1}$. Then
        \begin{equation*}
            \abs{f_{j, m}(\alpha)} \ll X^{m + \eps} \of{\frac{1}{q} + \frac{1}{X} + \frac{q}{X^j}}^{\frac{1}{\sigma_0(j)}}.
        \end{equation*}
    \end{enumerate}
    In both statements, the implied constants are allowed to depend on $\tau$.
\end{theorem}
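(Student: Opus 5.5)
The plan is to reduce both statements to the one-variable case $m = 1$, where they are standard consequences of the Vinogradov mean value theorem and Weyl's inequality. Since $\mathcal{D}$ is the image of a box under a unimodular map, we may compose $\varphi$ with that map and assume the summation runs over the integer points of a convex set of diameter $\ll X$. We then make one further unimodular change of variables so that the degree-$j$ part $\varphi_j$ has a non-zero coefficient $c$ of $x_1^j$. This is possible because a non-zero form does not vanish identically on $\Z^m$, and we may take a primitive integer vector with $\varphi_j \neq 0$ as the first basis vector. \cref{hyp} survives both changes, since all points involved have size $\ll X$.

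Write $\x = (x_1, \x')$ and fix $\x'$. By convexity the fibre of $\mathcal{D}$ over $\x'$ is an interval $I(\x')$ of length $\ll X$. There are $\ll X^{m-1}$ admissible $\x'$, and
\begin{equation*}
    f_{j,m}(\alpha) = \sum_{\x'} g_{\x'}(\alpha), \qquad g_{\x'}(\alpha) = \sum_{x_1 \in I(\x')} e\of{\alpha \of{c x_1^j + a_{j-1}(\x') x_1^{j-1} + \dots + a_0(\x')}}.
\end{equation*}
The coefficient $a_i(\x')$ collects contributions from the parts of $\varphi$ of degree $i' \geq i$. Hence, by \cref{hyp} and $\abs{\x'} \ll X$, we have $a_i(\x') \ll X^{j-i}$ uniformly in $\x'$. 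So each $g_{\x'}$ is a one-dimensional sum of the same type, with bounds uniform in $\x'$.

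\emph{Weyl bound (ii).} For $m = 1$, use Weyl's inequality with $j-1$ differencing steps when $j \leq 5$, giving the exponent $2^{1-j}$. For $j \geq 6$, use the Vinogradov mean value theorem (Bourgain--Demeter--Guth, or Wooley's efficient congruencing) via the standard Weyl-type corollary, giving the exponent $1/(j(j-1))$. Both arguments only see the leading coefficient $c\alpha$, after differencing or through the large sieve/Vinogradov system. The lower-order coefficients are harmless, provided one checks that the differenced polynomials still meet the size conditions, which is exactly what \cref{hyp} guarantees. Passing from a rational approximation $a/q$ of $\alpha$ to one of $c\alpha$ changes $q$ by at most a factor $\abs{c} \ll 1$. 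Summing the resulting bound trivially over the $\ll X^{m-1}$ values of $\x'$ gives (ii).

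\emph{Mean value (i).} By Hölder's inequality,
\begin{equation*}
    \abs{f_{j,m}(\alpha)}^{2u} \ll X^{(m-1)(2u-1)} \sum_{\x'} \abs{g_{\x'}(\alpha)}^{2u},
\end{equation*}
so it suffices to prove $\oint \abs{g(\alpha)}^{2u} \diff\alpha \ll X^{2u-j+\eps}$ uniformly for one-variable polynomials $c x^j + \dots$ satisfying the coefficient bounds. The integral counts solutions of $\sum_{i} (\phi(x_i) - \phi(y_i)) = 0$ with $x_i, y_i$ in an interval of length $\ll X$. For $u \geq 2^{j-1}$ this is Hua's lemma, proved by differencing and again only involving the leading coefficient. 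For $u \geq \frac12 j(j-1) + \lfloor\sqrt{2j+2}\rfloor$, we instead sort solutions by the values $h_i = \sum (x_l^i - y_l^i)$ for $1 \leq i \leq j-1$, which are forced by the single equation once $c$ is fixed. The number of solutions with given $(h_1, \dots, h_{j-1})$ is at most the corresponding count with all $h_i = 0$, by orthogonality and the triangle inequality. Summing over the $\ll X^{1+2+\dots+(j-1)}$ tuples and inserting the appropriate mean value bound for the incomplete Vinogradov system (Wooley's result, which yields the $\lfloor\sqrt{2j+2}\rfloor$ term) gives $X^{2u - j + \eps}$, using translation invariance to move the interval to $[1, X]$.

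The main obstacle is this second branch of (i). There one must cite precisely the right incomplete-system mean value (equations of degrees $1, \dots, j-1$ dropped except the top one) and check that the lower-order coefficients, bounded via \cref{hyp}, do not enlarge the range of the $h_i$ beyond $\ll X^i$. The Weyl-bound part is routine once the pure power $x_1^j$ has been arranged.
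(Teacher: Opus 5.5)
The overall route is the same as the paper's, which goes through the proof of Brandes--Parsell's Theorem 2.1. Your reduction to $m=1$ (a unimodular change giving a non-zero $x_1^j$ coefficient, fixing $\x'$, then H\"older), the Weyl bound (ii) and the Hua branch of (i) are fine. For (ii) you do not even need \cref{hyp}, since Weyl's inequality and the Vinogradov-based Weyl bound allow arbitrary real lower-order coefficients.

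\textbf{The gap is in the second branch of (i).} Suppose you sort the solutions of $\sum_l(\phi(x_l)-\phi(y_l))=0$ by all of $h_1,\dots,h_{j-1}$. Each class is then a complete (inhomogeneous) Vinogradov system, so its size is at most $J_{u,j}(X)$, and the total is $\ll X^{j(j-1)/2}J_{u,j}(X)$. The diagonal solutions give $J_{u,j}(X)\gg X^{u}$, so this is $\ll X^{2u-j+\eps}$ only when $u\geq \frac12 j(j+1)$. Inserting a bound for an incomplete system instead cannot help: dropping equations only increases the count. For $j\geq 5$ one has $\tau_0(j)=\frac12 j(j-1)+\lfloor\sqrt{2j+2}\rfloor<\frac12 j(j+1)$; for example $\tau_0(5)=13<15$ and $\tau_0(6)=18<21$. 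So your argument misses the range $\tau_0(j)\le u<\frac12 j(j+1)$. That is exactly where the $\lfloor\sqrt{2j+2}\rfloor$ term matters, and the paper uses it with $j=k-d$ whenever $k-d\geq 5$.

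To use a system lacking the low-degree equations, you must fix only the power sums of degrees $r+1,\dots,j-1$. That costs $X^{(r+1)+\dots+(j-1)}$ rather than $X^{1+\dots+(j-1)}$, and leaves $\sigma_1,\dots,\sigma_r$ free. But for $\phi(x)=cx^j+a_{j-1}x^{j-1}+\dots+a_1x+a_0$ the single equation reads $c\sigma_j+\sum_{i<j}a_i\sigma_i=0$. So $\sigma_j$ is coupled to the unconstrained low-degree power sums, with coefficients $a_i$ as large as $X^{j-i}$; in the application these depend on $\h$ and $\y^{(i)}$. The resulting classes are not instances of the pure incomplete system, so its mean value bound cannot simply be inserted. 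The translation you invoke to move the interval is also unavailable, because incomplete systems are not translation-invariant.

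What is needed is an estimate of this kind that holds uniformly in such lower-order coefficients. The paper obtains it by replacing the citation of \cite[Corollary 14.8]{wooley_nested_2019} with the first display in that corollary's proof, which holds uniformly in the coefficients of $\varphi$ thanks to \cref{hyp}. The concern you flag at the end, that lower-order coefficients might enlarge the range of the $h_i$, is not the real obstacle: the $h_i$ are power sums of the variables, so $\abs{h_i}\ll X^i$ whatever $\phi$ is.
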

\begin{proof}
    This follows the proof of \cite[Theorem 2.1]{brandes_hasse_2021}, with some modifications to be presented in \cite{brandes_parsell_corrigendum}. The main point is that the reference to \cite[Corollary 14.8]{wooley_nested_2019} should be replaced by a reference to the first equation in the proof of said corollary. Thanks to \cref{hyp}, the equation holds uniformly in the coefficients of $\varphi$, and the constant $A$ occurring in the proof of \cite[Theorem 2.1]{brandes_hasse_2021} is well-defined.
\end{proof}

\begin{lemma}\label{lemma:2.2}
    Let $j, r \in \N$, let $C \geq 1$ and suppose that $\alpha \in \mi_{j +r}(\theta)$ for some $\theta \in (0, 1]$. For each $1 \leq h \leq CX^r$, let $\varphi_{h} \in \Z[x]$ be a polynomial of degree $j$, let $\mathcal{D}_{h} \subseteq [-CX, CX]$ be an interval, and suppose that the $\varphi_h$ satisfy \cref{hyp} with implied constants independent of $h$. Then 
    \begin{equation*}
        \sum_{h = 1}^{X^r} \abs{f_{j, 1}(h \alpha; \varphi_h, \mathcal{D}_h)} \ll X^{r + 1 - \frac{\theta}{\sigma_0(j)} + \eps},
    \end{equation*}
    where the implied constant depends on the $\varphi_h$ only through the constants in \cref{hyp}.
\end{lemma}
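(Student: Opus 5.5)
For each $h$ we apply \cref{thm:2.1}(ii) with $m = 1$ to $f_{j,1}(h\alpha; \varphi_h, \mathcal{D}_h)$. Its constants depend on $\varphi_h$ only through \cref{hyp}, and on $\mathcal{D}_h$ only through the bound $CX$. To use it we need a good rational approximation to $h\alpha$, which we produce by Dirichlet's theorem. The whole argument reduces to a divisor-type counting problem over $h$, driven by the hypothesis $\alpha \in \mi_{j+r}(\theta)$.

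\emph{Step 1: approximations and the resulting bound.} For each $1 \le h \le X^r$, Dirichlet's theorem gives $q_h \le X^{j-\theta}$ and $b_h$ with $(b_h,q_h)=1$ and $\abs{q_h h\alpha - b_h} \le X^{-j+\theta}$. In particular $\norm{q_h h\alpha} \le q_h^{-1}$. Since $q_h \le X^{j-\theta}$, the term $q_h/X^j$ in \cref{thm:2.1}(ii) is at most $X^{-\theta}$, and $1/X \le X^{-\theta}$. Hence
\begin{equation*}
    \abs{f_{j,1}(h\alpha)} \ll X^{1+\eps}\of{q_h^{-1} + X^{-\theta}}^{1/\sigma_0(j)}.
\end{equation*}
The terms with $q_h \ge X^{\theta}$ therefore contribute $\ll X^{r+1-\theta/\sigma_0(j)+\eps}$ in total, which is acceptable.

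\emph{Step 2: counting the $h$ with small $q_h$.} This is the main step. Fix a dyadic range $Q \le q_h < 2Q$ with $Q < X^\theta$, and let $H(Q)$ be the number of such $h$. Put $q = hq_h$. Then $q \le 2X^{r}Q$ and $\abs{q\alpha - b_h} \le X^{-j+\theta}$. After removing $\gcd(b_h,q)$, write $\alpha$ close to $a/q'$ with $q' \mid q$.

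Suppose first that $q \le X^{\theta}$. Then $\abs{\alpha - a/q'} \le q'^{-1}X^{-(j+r)+\theta}$ would follow once $q' \le X^\theta$ and
\begin{equation*}
    X^{-j+\theta}/q \le q'^{-1}X^{-j-r+\theta}.
\end{equation*}
In that case $\alpha$ would lie in $\Ma_{j+r}(\theta)$, contradicting the hypothesis.

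In general $q$ may be as large as $X^{r+\theta}$, so we instead argue by pigeonhole. If $H(Q)$ is large, two distinct $h, h'$ produce approximations of $\alpha$ with denominators $hq_h$ and $h'q_{h'}$. Comparing the two forces a single rational $a/q_0$ that serves all of them, with $q_0 \mid hq_h$ for each such $h$.

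We then bound the $h$ by divisor counting. Once $\alpha \approx a/q_0$ is fixed, with $q_0$ essentially determined, each $h$ with $q_h \approx Q$ satisfies $q_0 \mid hq_h$ up to the usual reduction. The number of such $h \le X^r$ is $\ll X^{r+\eps}Q/q_0$ plus a term of size $X^{\eps}$ times the number of divisors. The minor arc condition yields either $q_0 > X^{\theta}$ or $\abs{q_0\alpha - a} > X^{-(j+r)+\theta}$. In the second case, $\abs{hq_h\alpha - b_h}$ being small forces
\begin{equation*}
    hq_h \gg X^{-j+\theta}\abs{\alpha - a/q_0}^{-1},
\end{equation*}
which again restricts $h$. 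Either way we aim for $H(Q) \ll X^{r+\eps}QX^{-\theta}$.

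\emph{Step 3: summing.} The range $Q$ contributes
\begin{equation*}
    H(Q)\, X^{1+\eps} Q^{-1/\sigma_0(j)} \ll X^{r+1+\eps} Q^{1-1/\sigma_0(j)} X^{-\theta}.
\end{equation*}
Since $Q < X^\theta$ and $\sigma_0(j) \ge 1$, this is $\le X^{r+1-\theta/\sigma_0(j)+\eps}$. Summing over the $O(\log X)$ dyadic ranges completes the proof.

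The main obstacle is Step 2: making the "one common rational" reduction rigorous when $hq_h$ exceeds $X^\theta$. If a cleaner route is needed, I would first try the standard large-sieve/Vaughan-type lemma that counts $h \le H$ with $\norm{h\alpha q} \le \delta$ for $q \le Q$ in terms of a single Dirichlet approximation of $\alpha$.
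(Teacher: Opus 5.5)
The paper does not reprove this lemma. It invokes Brandes--Parsell's Lemma~2.2 (as corrected), remarking only that each summand is bounded individually, uniformly in $\varphi_h$ and $\mathcal{D}_h$. Your Steps~1 and~3 are in that spirit and are fine. The substance lies in Step~2, and there the problem is not just missing detail: with $q_h$ chosen as in Step~1, the target $H(Q) \ll X^{r+\eps}QX^{-\theta}$ is false.

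Take $j \geq 2$ and $\alpha = 2X^{-(j+r)+\theta}$. This $\alpha$ lies in $\mi_{j+r}(\theta)$: it misses the arc at $1/1$ by a factor $2$, and for $2 \leq q \leq X^\theta$ one has $\abs{\alpha - a/q} \geq q^{-1} - \alpha > q^{-1}X^{-(j+r)+\theta}$. Yet every $h \leq X^r/2$ satisfies $\norm{h\alpha} \leq X^{-j+\theta}$. So $q_h = 1$, $b_h = 0$ is an admissible (indeed the minimal) output of Dirichlet's theorem, which gives $H(1) \gg X^r$. For these $h$ your bound $X^{1+\eps}(q_h^{-1} + X^{-\theta})^{1/\sigma_0(j)}$ is trivial.

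This is exactly your ``second case'', with common approximation $1/1$. There $b_h/(hq_h) = a/q_0$ and $\abs{hq_h\alpha - b_h} \leq X^{-j+\theta}$ give $hq_h\abs{\alpha - a/q_0} \leq X^{-j+\theta}$. That is an \emph{upper} bound on $hq_h$ (you wrote $\gg$), and it does not thin out the $h$ at all. For $j = 1$ and $\theta > 1/2$ things are worse still, since then $X^{j-\theta} < X^\theta$ and Step~1 never applies.

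The lemma is still true in such examples. The saving comes from the size of $\beta_h = h\alpha - b_h/q_h$, which your per-summand bound throws away. What is missing is an estimate in terms of $q_h + X^j\abs{q_hh\alpha - b_h}$ rather than $q_h$ alone, and it can be extracted from \cref{thm:2.1}(ii). When $q_h \leq X^\theta$ and $\beta_h \neq 0$, apply Dirichlet to $h\alpha$ again with parameter $N = \max\{2q_h, 2/(q_h\abs{\beta_h})\}$. This rules out $b_h/q_h$ and forces a new denominator $q' \geq (2q_h\abs{\beta_h})^{-1}$, which for $j \geq 2$ yields
\[
\abs{f_{j,1}(h\alpha)} \ll X^{1+\eps}\of{q_h + X^j\abs{q_hh\alpha - b_h}}^{-1/\sigma_0(j)} + X^{1-\theta/\sigma_0(j)+\eps}.
\]

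One then counts the $h$ with $q_h + X^j\norm{q_hh\alpha} \leq \Lambda$, for $\Lambda$ below a small multiple of $X^\theta$ (larger $\Lambda$ are harmless). The count is made against a single Dirichlet approximation $a/q$ of $\alpha$ with $q \leq X^{j+r-\theta}$, so that $q > X^\theta$ by the minor arc hypothesis.
\begin{itemize}
\item If $q \ll X^j/\Lambda$, then necessarily $b_h/(q_hh) = a/q$. Hence $q \mid q_hh$, and a divisor count gives $\ll X^{r+\eps}\Lambda/q$ such $h$.
\item Otherwise, the standard count of $m \leq \Lambda X^r$ with $\norm{m\alpha} \ll \Lambda X^{-j}$ gives $\ll (1 + \Lambda X^r/q)(1 + q\Lambda X^{-j}) \ll \Lambda X^{r-\theta}$.
\end{itemize}
Either way there are $\ll X^{r-\theta+\eps}\Lambda$ such $h$, and your Step~3 with $\Lambda$ in place of $Q$ then finishes the proof. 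The case $j = 1$ follows from the classical bound for $\sum_{m} \min\{X, \norm{m\alpha}^{-1}\}$.
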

\begin{proof}
    Given the alterations to the proof of \cite[Theorem 2.1]{brandes_hasse_2021}, this is \cite[Lemma 2.2]{brandes_hasse_2021} with a slight correction to appear in the aforementioned corrigendum. In the proof, each summand is estimated individually, by bounds which hold uniformly in $\varphi$ and $\mathcal{D}$ under the above hypotheses.
\end{proof}

\begin{lemma}\label{lemma:2.3}
    Let $j, r \in \N$, let $C \geq 1$, and suppose that $\alpha \in \mi_{j +r}(\theta)$ for some $\theta \in (0, 1]$. For each $\h = (h_1, \dots, h_r) \in \Z^r$, let $p_{\h} \in \Z[x]$ be a polynomial of degree $j$ satisfying \cref{hyp} with implied constants independent of $\h$, and let $\mathcal{D}_{\h} \subseteq [-CX, CX]$ be an interval. Then
    \begin{equation*}
        \sum_{\substack{\abs{h_1}, \dots, \abs{h_r} \leq CX \\ h_1 \cdots h_r \neq 0}} \abs{f_{j, 1} (h_1 \cdots h_r \alpha; p_{\h}, \mathcal{D}_{\h})} \ll X^{r + 1 - \frac{\theta}{\sigma_0(j)} + \eps },
    \end{equation*}
    where the implied constant depends on the $p_{\h}$ only through the constants in \cref{hyp}.
\end{lemma}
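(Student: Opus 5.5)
We reduce \cref{lemma:2.3} to \cref{lemma:2.2} by grouping the summands according to the value of the product $h = h_1 \cdots h_r$. The sum in the statement is taken over $\h$ with $1 \leq \abs{h} \leq C^r X^r$. For each such $h$ there are at most $d_r(\abs{h}) \ll X^{\eps}$ choices of $\h$, since $d_r$ is the $r$-fold divisor function and we also allow signs. Using $f_{j,1}(-\alpha;\varphi,\mathcal{D}) = \overline{f_{j,1}(\alpha;\varphi,\mathcal{D})}$, the terms with $h<0$ are absolute values of sums at the argument $\abs{h}\alpha$ with the same polynomial. We may therefore bound the left-hand side by
\begin{equation*}
    X^{\eps} \sum_{h=1}^{C^r X^r} \max_{\h:\, \abs{h_1\cdots h_r} = h} \abs{f_{j,1}(h\alpha; p_{\h}, \mathcal{D}_{\h})}.
\end{equation*}

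For each $h$, we choose $\h$ attaining the maximum and set $\varphi_h \eqdef p_{\h}$ and $\mathcal{D}_h \eqdef \mathcal{D}_{\h}$. These satisfy \cref{hyp} with implied constants independent of $h$, because the $p_{\h}$ do uniformly in $\h$. Each $\mathcal{D}_h \subseteq [-CX,CX]$ is an interval. The range of $h$ is $h \leq C^rX^r$ rather than $X^r$. To handle this, we split it into $O_{C,r}(1)$ ranges of length at most $X^r$. Alternatively, we observe that the proof of \cref{lemma:2.2} estimates each summand individually and so applies verbatim to the range $h \le C' X^r$ with $C' = \max(C, C^r)$; \cref{lemma:2.2} already allows indices up to $CX^r$. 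Applying \cref{lemma:2.2} with $\alpha\in\mi_{j+r}(\theta)$ then gives the bound $X^{r+1-\theta/\sigma_0(j)+\eps}$. We absorb the divisor factor $X^{\eps}$ by renaming $\eps$.

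The main obstacle is bookkeeping rather than any analytic difficulty. One point to check is that \cref{lemma:2.2} tolerates summation up to a constant multiple of $X^r$; this is asserted through the uniform per-summand estimates noted in its proof. The other is that the maximising choice of $\h$ may depend on $h$ in an arbitrary manner. This is harmless, because the bound in \cref{lemma:2.2} is uniform in the family $(\varphi_h,\mathcal{D}_h)$ and depends only on the constants in \cref{hyp}.
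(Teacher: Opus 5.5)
Your proof is correct and follows the paper's route: the paper says only that the lemma follows from \cref{lemma:2.2} by a standard divisor estimate, stressing (as you do) that $p_{\h}$ and $\mathcal{D}_{\h}$ may depend on $\h$, which is what justifies choosing a maximising $\h$ for each value of the product. One small caveat: your first option of splitting $[1, C^rX^r]$ into blocks of length $X^r$ does not follow from \cref{lemma:2.2} as stated, because shifting $h$ changes the argument $h\alpha$. So rely on your second option, namely applying \cref{lemma:2.2} with $C^r$ in place of $C$ (its proof bounds each summand individually), which is what its hypothesis $1 \leq h \leq CX^r$ is clearly meant to allow.
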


\begin{proof}
    The hypothesis made on $p_{\h}$ means that this follows from \cref{lemma:2.2} with a standard divisor estimate. Note that both the polynomial $p_{\h}$ and the interval $\mathcal{D}_{\h}$ here are (crucially) allowed to depend on $\h$. 
\end{proof}

\section{Minor arc contribution}\label{sec:minor}
For $(\alpha, \beta) \in \T^2$, define
\begin{equation*}
    T(\alpha, \beta) \eqdef \sum_{\xu \in [-X, X]^s \cap \Z^s} e\of{\alpha F(\xu) + \beta G(\xu)}.
\end{equation*}
Since 
\begin{equation*}
    \oint e (\alpha m ) \diff \alpha = \begin{cases}
        1 & \quad \text{if } m = 0 \\
        0 & \quad \text{if } m \in \Z \setminus \set{0},
    \end{cases}
\end{equation*}
we have 
\begin{equation*}
    N_{F, G}(X) = \oint \oint T(\alpha, \beta) \diff \alpha \diff \beta.
\end{equation*}

We want to show that if all of
\begin{align}
    s &> 
    \begin{cases}
        (2d+1) 2^d n \sigma & \quad \text{if } k - d = 1, \\
        2^{d+1}n(\tau + d \sigma) & \quad \text{if } k - d \geq 2, 
    \end{cases} 
    \label{eq:req_temp_1} \\
    s &> 2^{d-1} d + 2^{d} n (d + 2) \sigma, \label{eq:req_temp_2} \\
    s &> 2^{d} (d-1) + 3 \cdot 2^{d} n\sigma \label{eq:req_temp_3}
\end{align}
are satisfied, then for some $\nu > 0$, the asymptotic formula 
\begin{equation*}
    N_{F, G}(X) = X^{s - k - d} \chi_{\infty} \prod_p \chi_p + O(X^{s-k-d-\nu})
\end{equation*}
holds, where $\chi_{\infty}$ and $\chi_p$ correspond to local solution densities, as outlined in the introduction. Let
\begin{equation*}
    u \eqdef \frac{s}{n}, \quad \tau \eqdef \tau_0(k-d), \quad \sigma \eqdef \sigma_0(k-d), \quad t \eqdef \frac{s}{2^d n}.
\end{equation*}

\subsection{The minor arcs for \texorpdfstring{$\alpha$}{alpha}}

Our first goal is to bound the size of $T(\alpha, \beta)$ on the minor arcs. We want to eventually prove the following:
\begin{lemma}\label{lemma:ch}
    Suppose that \cref{eq:req_temp_1} holds. For every $\theta \in (0, 1]$ and every $\beta \in \T$, we have 
    \begin{enumerate}[label={(\Alph*)}]
        \item the supremum bound \label{ch:sup_bound}
        \begin{equation*}
            \sup_{\alpha \in \mi(\theta)} \abs{T(\alpha, \beta)} \ll X^{s - \frac{t}{\sigma}\theta + \eps},
        \end{equation*}
        and
        \item the mean value bound \label{ch:mv_bound}
        \begin{equation*}
            \oint \int_{\mi(\theta)} \abs{T(\alpha, \beta)} \diff \alpha \diff \beta \ll X^{s-k+d - \frac{t-2 \tau}{\sigma} \theta + \eps}.
        \end{equation*}
    \end{enumerate}
\end{lemma}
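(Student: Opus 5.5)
Following the outline in the introduction, the plan is to separate variables using the block structure of $F$, difference $d$ times in one variable per block so that $G$ disappears, and then feed the resulting degree-$(k-d)$ one-dimensional sums into \cref{lemma:2.3} and \cref{thm:2.1}.

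\textbf{Reduction to one-dimensional sums.} In each block $\x^{(i)}$ we first apply a unimodular change of variables so that $\psi_i$ has a non-zero coefficient $c_i$ of $y_1^k$. This is possible because a non-zero form does not vanish at some primitive integer vector, and that vector can be completed to a unimodular matrix. The box $[-X,X]^n$ becomes a region $\mathcal{D}_i$ of the type in \cref{thm:2.1}, with $|\cdot| \le \tau X$. Write $\x^{(i)} = (y_i, \z^{(i)})$ with $\z^{(i)} \in \Z^{n-1}$ and $\yu = (y_1,\dots,y_u)$. We fix all the $\z^{(i)}$ and bound the sum trivially over them, so
\begin{equation*}
|T(\alpha,\beta)| \le \sum_{\z} |S_{\z}(\alpha,\beta)|, \qquad S_{\z}(\alpha,\beta) = \sum_{\yu} e\bigl(\alpha F(\yu,\z) + \beta G(\yu,\z)\bigr),
\end{equation*}
where $\yu$ runs over a box-like set whose fibres in each $y_i$ are intervals of length $\ll X$. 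There are $\ll X^{u(n-1)}$ choices of $\z$. For fixed $\z$, the phase $\alpha\sum_i \psi_i(y_i,\z^{(i)})$ is still separated in $y_1,\dots,y_u$, while $G$ is a general polynomial of degree $\le d$ in $\yu$. We then apply Weyl differencing $d$ times in the $\yu$-variables jointly, via the Cauchy--Schwarz/van der Corput step with differences $\h_1,\dots,\h_d \in \Z^u$. After $d$ steps the $\beta G$-part becomes independent of $\yu$ and disappears after taking absolute values. The $F$-part becomes $\sum_i \alpha\, \Delta_{h_{1,i}}\cdots\Delta_{h_{d,i}} \psi_i(y_i,\z^{(i)})$, a sum of one-variable polynomials in $y_i$ of degree $k-d$. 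Each has leading coefficient $c_i\frac{k!}{(k-d)!}h_{1,i}\cdots h_{d,i}$, and its lower coefficients satisfy \cref{hyp} uniformly in $\h,\z$, since the entries of $\z$ and $\h$ are $O(X)$. This gives
\begin{equation*}
|S_{\z}|^{2^d} \ll X^{(2^d-d-1)u} \sum_{\h_1,\dots,\h_d} \prod_{i=1}^u \bigl|f_{k-d,1}\bigl(c_i' h_{1,i}\cdots h_{d,i}\alpha;\, p_{\h,\z,i},\, \mathcal{D}_{\h,\z,i}\bigr)\bigr|,
\end{equation*}
where the terms with some $h_{l,i}=0$ are bounded trivially by a lower-order contribution.

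\textbf{The supremum bound \ref{ch:sup_bound}.} The product over $i$ factorises the $\h$-sum into $\prod_i \sum_{h_{1,i},\dots,h_{d,i}} |f_{k-d,1}(\cdots)|$. For each factor we apply \cref{lemma:2.3} with $r=d$, $j=k-d$ and $\alpha\in\mi_k(\theta)$, absorbing the constant $c_i'$ into $\alpha$; the minor-arc property survives with a harmless change of $\theta$ by $O(1/\log X)$, or we adapt \cref{lemma:2.2} directly. Each factor is then $\ll X^{d+1-\theta/\sigma+\eps}$. Multiplying over the $u$ factors, taking the $2^d$th root and summing over $\z$ gives
\begin{equation*}
|T| \ll X^{u(n-1)} X^{u - \frac{u\theta}{2^d\sigma}+\eps} = X^{s - \frac{t}{\sigma}\theta + \eps},
\end{equation*}
since $u/2^d = t$.

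\textbf{The mean value bound \ref{ch:mv_bound}.} Here we write $|T| \le |T|^{1-\lambda}\,|T|^{\lambda}$. To the first factor we apply the supremum bound. For the second, we need $\oint\oint |T|^{\lambda}$, and this is where the main difficulty lies, because \ref{ch:sup_bound} only controls single values of $T$. We intend to keep $\beta$ inside the differenced inequality. After H\"older over $\z$ and over the $\h$-variables, the $\alpha$-integral of the product of the $u$ one-dimensional sums is handled by H\"older across blocks together with \cref{thm:2.1}(i) with $2u' \ge 2\tau$ copies, giving saving $X^{-(k-d)}$. The $\beta$-integral over $\T$ contributes nothing extra, since $G$ has already been eliminated, and a further saving of $X^{-2d}$ comes from the $\h$-sums, whose weight is concentrated on diagonal-type configurations. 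The bookkeeping must produce the total exponent $s-k+d-\frac{t-2\tau}{\sigma}\theta$. That is, exactly $2\tau$ of the $t$ effective copies are spent on the mean value, while the remaining copies are estimated pointwise on $\mi(\theta)$ via \cref{lemma:2.3}.

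Condition \cref{eq:req_temp_1} is what makes the exponents close: it ensures $t>2(\tau+d\sigma)$, or $t>(2d+1)\sigma$ when $k-d=1$, where $\tau$ is replaced by the trivial case of \cref{thm:2.1}. I expect that verifying $X^{-(k-d)}$ combines with the $\h$-savings to give precisely $X^{-k+d}$, rather than a weaker power, is the main obstacle. The first thing I would try is to differentiate in $\beta$ only after integrating over $\beta$, so that orthogonality in $\beta$ restricts the original variables before differencing.
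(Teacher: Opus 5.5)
\textbf{Part (A).} Your argument for the supremum bound is correct and essentially the paper's. You make a unimodular change of variables in each block, fix the remaining $n-1$ coordinates, apply $d$ Weyl differencing steps to kill $\beta G$, and then use \cref{lemma:2.3} block by block. Two small remarks. Because the change of variables is block-diagonal, the free variables do range over a product of intervals, so you can skip the paper's passage to a box with a linear twist. You also need not absorb the leading coefficient into $\alpha$, since \cref{lemma:2.3} already accepts any $p_{\h}$ satisfying \cref{hyp}.

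\textbf{Part (B): the gap.} For the mean value bound you do not yet have an argument. You never fix $\lambda$ and leave the exponent count open. The one concrete mechanism you name, an extra saving of $X^{-2d}$ from $\h$-sums concentrated on diagonal configurations, does not exist and is not wanted. Combined with the $X^{-(k-d)}$ from \cref{thm:2.1}(i), it would give the exponent $s-k-d$, not the stated $s-k+d$.

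The exponent in (B) is made up of exactly two savings:
\begin{itemize}
\item one saving $X^{-(k-d)}$ from a $2\tau$-th moment;
\item the pointwise saving $X^{-\theta/\sigma}$ on each of the other $t-2\tau$ copies.
\end{itemize}
The missing $2d$ is recovered only later, in \cref{lemma:minor_mi}, via $(t-2\tau)/\sigma>2d$. Within this lemma, \cref{eq:req_temp_1} is needed only to ensure $t\ge 2\tau$.

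\textbf{The missing observation.} After differencing, the majorant of your $\abs{S_{\z}}$ (the paper's $\abs{U(\yu)}$) no longer depends on $\beta$. So the $\beta$-integral is trivial, and orthogonality in $\beta$ plays no role.

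The $\h$-sums give no saving either. After discarding the $\ll X^{d-1}$ vectors with $h^{(1)}\cdots h^{(d)}=0$, H\"older gives $g_i^{2\tau}\ll X^{d(2\tau-1)}\sum_{\h}\abs{f_i(\alpha;\h)}^{2\tau}$. The substitution $\alpha\mapsto h^{(1)}\cdots h^{(d)}\alpha$ preserves integrals over $\T$, so \cref{thm:2.1}(i) yields $\oint g_i^{2\tau}\diff\alpha\ll X^{2\tau(d+1)-(k-d)+\eps}$.

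The paper then separates the blocks using $\abs{a_1\cdots a_u}\le\sum_i\abs{a_i}^u$. It finishes with $\int_{\mi(\theta)}g_i^t\le\of{\sup_{\mi(\theta)}g_i}^{t-2\tau}\oint g_i^{2\tau}$.

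\textbf{Completing your route.} Your $\abs{T}^{1-\lambda}\abs{T}^{\lambda}$ splitting also works, provided you do three things:
\begin{itemize}
\item Take $\lambda=2\tau/t$.
\item Apply the splitting to each $S_{\z}$ separately, not to $T$. Passing from $(\sum_{\z}\abs{S_{\z}})^{\lambda}$ to $\sum_{\z}\abs{S_{\z}}^{\lambda}$ loses $X^{(1-\lambda)(n-1)u}$.
\item Use H\"older across the $u$ blocks: $\oint\prod_i g_i^{2\tau/u}\le\prod_i\of{\oint g_i^{2\tau}}^{1/u}$.
\end{itemize}
This gives $\oint\oint\abs{S_{\z}}^{\lambda}\ll X^{\lambda u-(k-d)+\eps}$. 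Hence $\oint\int_{\mi(\theta)}\abs{S_{\z}}\ll X^{u-(k-d)-\frac{t-2\tau}{\sigma}\theta+\eps}$, and summing over the $\ll X^{(n-1)u}$ values of $\z$ gives (B).
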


\begin{remark}\label{rmk:theta1_bound}
    Although the mean value bound \labelcref{ch:mv_bound} is stated for every $\theta \in (0, 1]$, we use it only for $\theta = 1$. We retain the slightly more general statement since the same proof gives the estimate uniformly for all $\theta$ at no additional cost.
\end{remark}

The proof, being a little lengthy, is relegated to \cref{sec:proof_lemma}.

Once \cref{lemma:ch} has been established, this allows us to bound the integral over the minor arcs. 
\begin{lemma}\label{lemma:minor_mi}
    Assume that \cref{eq:req_temp_1} holds. Let $\theta_* \in (0, 1]$ satisfy
    \begin{equation}\label{eq:theta_star}
        \of{\frac{t}{\sigma} - 2} \theta_* > d.
    \end{equation}
    Then there is a $\nu > 0$ such that for all $\theta \in [\theta_*, 1]$, we have
    \begin{equation*}
         \oint\int_{\mi(\theta)} \abs{T(\alpha, \beta)} \diff \alpha \diff \beta \ll X^{s - k-d - \nu}.
    \end{equation*}
\end{lemma}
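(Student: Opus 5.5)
The plan is a dyadic decomposition of $\mi(\theta)$ into nested shells, with the two parts of \cref{lemma:ch} combined on each shell. Fix $\theta \in [\theta_*, 1]$ and choose a sequence
\begin{equation*}
    \theta = \theta_0 < \theta_1 < \dots < \theta_M = 1,
\end{equation*}
with consecutive gaps $\theta_{i+1} - \theta_i \le \delta$. Here $\delta > 0$ is small, depends only on the parameters, and $M \ll 1/\delta$. Then
\begin{equation*}
    \mi(\theta) = \mi(1) \cup \bigcup_{i=0}^{M-1} \of{\mi(\theta_i) \setminus \mi(\theta_{i+1})},
\end{equation*}
and $\mi(\theta_i) \setminus \mi(\theta_{i+1}) \subseteq \Ma(\theta_{i+1}) \cap \mi(\theta_i)$.

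\textbf{The piece $\mi(1)$.} Here I apply \labelcref{ch:mv_bound} with $\theta = 1$, which gives the bound
\begin{equation*}
    X^{s-k+d-\frac{t-2\tau}{\sigma}+\eps}.
\end{equation*}
This is $\ll X^{s-k-d-\nu}$ provided $\frac{t-2\tau}{\sigma} > 2d$, that is, $t > 2\tau + 2d\sigma$. Since $t = s/(2^d n)$, this is exactly \cref{eq:req_temp_1} in the case $k-d\ge 2$. When $k-d=1$ we have $\tau = \sigma = 1$, and the requirement reads $t > 2 + 2d = (2d+2)$, whereas \cref{eq:req_temp_1} only gives $t > 2d+1$. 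I will check this case against the precise form of \labelcref{ch:mv_bound}; presumably the author's bound for $k-d=1$ yields the needed saving, perhaps because $\tau_0(1)$ enters differently. I flag this as a point to verify rather than resolve now.

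\textbf{The intermediate shells.} On $\Ma(\theta_{i+1}) \cap \mi(\theta_i)$, I use the supremum bound \labelcref{ch:sup_bound} at level $\theta_i$, uniformly in $\beta$. The measure in $\alpha$ is controlled by \cref{eq:major_volume}, and $\beta$ ranges over $\T$, which has measure $1$. The contribution of the $i$-th shell is therefore
\begin{equation*}
    \ll X^{s-\frac{t}{\sigma}\theta_i+\eps} \cdot X^{-k+2\theta_{i+1}}
    \le X^{s-k-\left(\frac{t}{\sigma}-2\right)\theta_i+2\delta+\eps}.
\end{equation*}
Since $\theta_i \ge \theta_*$, the hypothesis \cref{eq:theta_star} gives $(t/\sigma - 2)\theta_i \ge d + \eta$ for some fixed $\eta > 0$; note also that $t/\sigma > 2$ by \cref{eq:theta_star}. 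Choosing $\delta < \eta/4$ and $\eps$ small, each shell contributes $X^{s-k-d-\eta/2+\eps}$. There are $M \ll 1$ shells, so summing them gives the claim with $\nu = \min(\eta/4, \cdot)$ together with the saving from $\mi(1)$.

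\textbf{Main obstacle.} The delicate point is that the bound be uniform in $\theta \in [\theta_*, 1]$. This is handled by fixing $\delta$ in terms of $\theta_*$ only: the gap condition is all that is needed, and any $\theta$ can be started from. The other point needing care is the $\mi(1)$ term in the case $k-d=1$, as noted above.
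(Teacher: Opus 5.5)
Apart from one case, your argument is the paper's. You use the same shell decomposition, bound each shell by the supremum bound at the smaller parameter times $\operatorname{vol}\,\mathfrak{M}$ at the larger one, and choose the gap condition in terms of $\theta_*$ only. The paper runs the chain from $1$ down to $\theta_*$, which covers every $\theta\geq\theta_*$ because $\mi(\theta)\subseteq\mi(\theta_*)$, whereas you start the chain at $\theta$ itself; either works.

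\textbf{The gap is the piece $\mi(1)$ when $k-d=1$.} You leave this case unresolved, and your guess that $\tau_0(1)$ might enter differently does not help. In fact $\tau_0(1)=\min\{1,0+\lfloor 2\rfloor\}=1=\sigma_0(1)$. So the mean value bound (B) at $\theta=1$ gives only $X^{s-t+1+\eps}$, which genuinely needs $t>2d+2$, and \cref{eq:req_temp_1} does not supply this.

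The missing step is to drop (B) on $\mi(1)$ in this case. Use instead the supremum bound (A) with $\theta=1$, together with the trivial bound on the measure. Since $\sigma=1$, this gives
\begin{equation*}
\oint\int_{\mi(1)}\abs{T(\alpha,\beta)}\diff\alpha\diff\beta\ll\sup_{\alpha\in\mi(1),\,\beta\in\T}\abs{T(\alpha,\beta)}\ll X^{s-t+\eps}.
\end{equation*}
For $k-d=1$, the hypothesis \cref{eq:req_temp_1} says exactly $t>2d+1=k+d$, so this is $\ll X^{s-k-d-\nu}$.

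This is why the paper splits into two cases. At $\theta=1$, (B) improves on (A) only when $2\tau/\sigma<k-d$. That fails for $k-d=1$, where $2\tau/\sigma=2>1$. For $k-d\geq 2$, your use of (B) is precisely what \cref{eq:req_temp_1} is tailored to, since the condition there is $t>2\tau+2d\sigma$.
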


\begin{proof}
    As in the proof of \cite[Lemma 3.2]{brandes_hasse_2021}, we use \cref{lemma:ch} and the major arc volume estimate in \cref{eq:major_volume}.

    Construct a decreasing sequence $(\theta_i)$ of the form $1 = \theta_0 > \theta_1 > \dots > \theta_N = \theta_*$. With the requirement for $\theta_*$ in \cref{eq:theta_star}, this can be chosen with $N = O(1)$ and 
    \begin{equation}\label{eq:step_size_theta}
        2\of{\theta_{i-1} - \theta_i} < \of{\frac{t}{\sigma}-2} \theta_* - d. 
    \end{equation}
    Note that
    \begin{equation*}
        \mi(\theta_*) = \mi(\theta_N) = \mi(\theta_0) \cup \bigcup_{i = 1}^N \of{\mi(\theta_i) \setminus \mi( \theta_{i-1})},
    \end{equation*}
    so it suffices to show that $\mi(\theta_0)$ and each set $\mi(\theta_i) \setminus \mi( \theta_{i-1})$ give an acceptable contribution to the integral.

    For $\theta_0 = 1$, we distinguish two different cases. If $k - d \geq 2$, then the mean value bound \labelcref{ch:mv_bound} in \cref{lemma:ch} gives
    \begin{equation*}
        \oint \int_{\mi(1)} \abs{T(\alpha, \beta)} \diff \alpha \diff \beta \ll X^{s -k + d - \frac{t-2 \tau}{\sigma} + \eps },
    \end{equation*}
    which is $\ll X^{s-k-d-\nu}$ for some $\nu > 0$, since $s > s_*$ is equivalent to $(t-2 \tau) / \sigma > 2d$. If $k - d = 1$, then $\sigma = 1$, and the supremum bound \labelcref{ch:sup_bound} of \cref{lemma:ch} gives
    \begin{equation*}
        \oint \int_{\mi(1)} \abs{T(\alpha, \beta)} \diff \alpha \diff \beta \ll \sup_{\substack{\alpha \in \mi(1) \\ \beta \in \T}} \abs{T(\alpha, \beta)} \ll X^{s - t + \eps},
    \end{equation*}
    and this is in turn $\ll X^{s-k-d - \nu}$ for some $\nu > 0$ since $s > s_*$ is equivalent to $t > k+d$.

    Next, for each $i \geq 1$, the supremum bound in \labelcref{ch:sup_bound} together with the volume estimate \cref{eq:major_volume} gives
    \begin{align*}
        \oint \int_{\mi(\theta_i) \setminus \mi(\theta_{i-1})} \abs{T(\alpha, \beta)} \diff \alpha \diff \beta &\ll \operatorname{vol} \Ma(\theta_{i-1}) \sup_{\substack{\alpha \in \mi(\theta_i) \\ \beta \in \T}} \abs{T(\alpha, \beta)} \\
        &\ll X^{-k + 2 \theta_{i-1}} X^{s - \frac{t}{\sigma} \theta_i + \eps}.
    \end{align*}
    Since $t / \sigma > 2$ and $\theta_i \geq \theta_*$, \cref{eq:step_size_theta} shows that
    \begin{equation*}
        2 \theta_{i-1}- \frac{t}{\sigma}\theta_i = 2\of{\theta_{i-1} - \theta_i} -\of{\frac{t}{\sigma}-2}\theta_i \leq 2\of{\theta_{i-1} - \theta_i} -\of{\frac{t}{\sigma}-2}\theta_* < -d,
    \end{equation*}
    which means that each set $\mi(\theta_i) \setminus \mi(\theta_{i-1})$ contributes an acceptable amount. 
\end{proof}

\subsection{Proof of \texorpdfstring{\cref{lemma:ch}}{the lemma}}\label{sec:proof_lemma}

\subsubsection{Change of variables}
By \cref{eq:F_psi}, after relabelling the variables we may write
\begin{equation*}
    F(\xu) = \psi_1(\x^{(1)}) + \dots + \psi_u(\x^{(u)}).
\end{equation*}
Since $F$ is non-singular, so is each $\psi_i$; in particular, $\psi_i \neq 0$, so there is a vector $\w^{(i)} \in \Z^n$ with $\psi_i (\w^{(i)}) \neq 0$. We may assume that $\w^{(i)}$ is primitive, which means there is a matrix $A^{(i)} \in \SL_n(\Z)$ whose first column is $\w^{(i)}$. Let $A \in \SL_s(\Z)$ be the block-diagonal matrix with diagonal blocks $A^{(1)}, \dots, A^{(u)}$, and set
\begin{equation*}
    \psit_i (\x^{(i)}) \eqdef \psi_i (A^{(i)} \x^{(i)}), \quad \Ft(\xu) \eqdef F(A \xu) = \sum_{i = 1}^u \psit_i(\x^{(i)}), \quad \Gt(\xu) \eqdef G(A \xu).
\end{equation*}
Since $\xu \mapsto A \xu$ is a bijection on $\Z^s$, the sum $T(\alpha, \beta)$ is the sum of $e \of{\alpha \Ft(\xu) + \beta \Gt(\xu)}$ over $\xu \in A^{-1}[-X, X]^s \cap \Z^s$. Fix $C \geq 1$ such that $A^{-1}[-X, X]^s \subseteq [-CX, CX]^s$; note that this can be chosen to depend only on the $\psi_i$. Then \cite[Lemma 5.2]{brandes_forms_2014}, with $P = X$ and $CX$ in place of $X$, gives
\begin{equation}\label{eq:T_Ttilde}
    \abs{T(\alpha, \beta)} \ll X^{\eps} \Tt(\alpha, \beta),
\end{equation}
where
\begin{equation*}
    \Tt(\alpha, \beta) \eqdef \sup_{\bm{\eta} \in [0, 1]^s} \abs{\sum_{\xu \in [-CX, CX]^s \cap \Z^s} e \of{\alpha \Ft(\xu) + \beta \Gt(\xu) - \bm{\eta} \cdot \xu}}.
\end{equation*}
That is, at the cost of a factor $X^{\eps}$ and a linear twist, the sum over $A^{-1}[-X, X]^s$ is replaced by a sum over $[-CX, CX]^s$, and, using \cref{eq:T_Ttilde}, it suffices to prove \cref{lemma:ch} with $\Tt(\alpha, \beta)$ in place of $\abs{T(\alpha, \beta)}$. Moreover, the coefficient of $x_1^k$ in $\psit_i$ is
\begin{equation*}
    \psit_i(1, 0, \dots, 0) = \psi_i(\w^{(i)}) \neq 0.
\end{equation*}

For the rest of this proof, we drop the tildes from $\Ft$, $\Gt$ and $\psit_i$, so that each $\psi_i$ has a non-zero coefficient of the monomial $x_1^k$.

\subsubsection{Fixing all but one variable in each block}
For each $i$, write $\x^{(i)} = (z^{(i)}; \y^{(i)})$, with $z^{(i)} \in \Z$ and $\y^{(i)} \in \Z^{n-1}$, and set $\z \eqdef (z^{(1)}, \dots, z^{(u)}) \in \Z^u $ and $\yu \eqdef (\y^{(1)}, \dots, \y^{(u)}) \in \Z^{(n-1)u}$ so that $\xu = (\z; \yu)$. For each $\bm{\eta} \in [0, 1]^s$, the sum inside the supremum defining $\Tt(\alpha, \beta)$ can be written as a sum 
\begin{equation*}
    \sum_{\abs{\yu} \leq CX} U(\yu),
\end{equation*}
where
\begin{equation*}
    U(\yu) = U(\alpha, \beta, \bm{\eta}; \yu) \eqdef \sum_{\abs{z^{(1)}} \leq CX} \cdots \sum_{\abs{z^{(u)}} \leq CX} e\Big(\alpha \sum_{i = 1}^u \psi_i(z^{(i)}; \y^{(i)}) + \beta G\of{\z; \yu} - \bm{\eta} \cdot (\z; \yu)\Big).
\end{equation*}
With this, we can conclude that
\begin{equation}\label{eq:Ttilde_U}
    \Tt(\alpha, \beta) \leq \sup_{\bm{\eta} \in [0, 1]^s} \sum_{\abs{\yu} \leq CX} \abs{U(\alpha, \beta, \bm{\eta}; \yu)}.
\end{equation}
For ease of exposition, set
\begin{equation*}
    f_{\yu}(\z) \eqdef \alpha \sum_{i = 1}^u \psi_i(z^{(i)}; \y^{(i)}) + \beta G(\z; \yu) - \bm{\eta} \cdot (\z; \yu)
\end{equation*}
for each $\z \in \Z^u$.

\subsubsection{Weyl differencing}
For $\h \in \Z^u$ and a function $\phi$ on $\Z^u$, let
\begin{equation*}
    \Delta_{\h} \phi(\z) \eqdef \phi(\z + \h) - \phi(\z)
\end{equation*}
denote the forward differencing operator. For $d$ vectors $\h^{(1)}, \dots, \h^{(d)} \in \Z^u$, let $\h_i \eqdef (h_i^{(1)}, \dots, h_i^{(d)})$ be the vector in $\Z^d$ defined for each $1 \leq i \leq u$ by grouping together the $i$th elements of each vector. Applying Weyl differencing $d$ times as in the proof of \cite[Lemma 2.3]{vaughan_hardy-littlewood_1997}, we find intervals $I_i(\h_i) \subseteq [-CX, CX]$ depending only on $\h_i$ such that
\begin{equation*}
    \abs{U(\yu)}^{2^d} \ll X^{(2^d - d- 1) u}\sum_{\abs{\h^{(1)}} \leq 2CX} \cdots \sum_{\abs{\h^{(d)}} \leq 2CX} \abs{\sum_{z^{(1)} \in I_1(\h_1)} \cdots \sum_{z^{(u)} \in I_u(\h_u)} e \of{\Delta_{\h^{(1)}} \cdots \Delta_{\h^{(d)}}\of{f_{\yu}(\z)}}}.
\end{equation*}
Since $G$ has degree $d$ and $\bm{\eta} \cdot (\z; \yu)$ is linear in $\z$, the terms $\Delta_{\h^{(1)}} \cdots \Delta_{\h^{(d)}} G(\z; \yu)$ and $\Delta_{\h^{(1)}} \cdots \Delta_{\h^{(d)}} (\bm{\eta} \cdot (\z; \yu))$ are independent of $\z$, so that taking absolute values allows us to remove the contributions from $G$ and from the linear twist. Next, note that since the $i$th summand of $F$ depends only on $z^{(i)}$, we have
\begin{equation*}
    \Delta_{\h^{(1)}} \cdots \Delta_{\h^{(d)}} \psi_i(z^{(i)}; \y^{(i)}) = h_i^{(1)} \cdots h_i^{(d)} p_{i, \h_i} (z^{(i)}; \y^{(i)}),
\end{equation*}
where $p_{i, \h_i} (\cdot; \y^{(i)})$ is a polynomial of degree $k-d$ in $z$ with leading coefficient non-zero and independent of $\h_i$. Since $\psi_i$ is homogeneous and $\abs{\h_i}, \abs{\y^{(i)}} \ll X$, the polynomials in the above display satisfy \cref{hyp} with $j = k-d$, uniformly in $\h_i$ and $\y^{(i)}$. Hence
\begin{equation*}
    \abs{U(\yu)}^{2^d} \ll X^{(2^d - d- 1) u}\sum_{\abs{\h^{(1)}} \leq 2CX} \cdots \sum_{\abs{\h^{(d)}} \leq 2CX} \abs{\prod_{i = 1}^u f_i(\alpha; \h_i)},
\end{equation*}
where for $\h = (h^{(1)}, \dots, h^{(d)}) \in \Z^d$, we write (suppressing the dependence on $\y^{(i)}$)
\begin{equation*}
    f_i(\alpha; \h) \eqdef \sum_{z \in I_i(\h)} e \of{\alpha h^{(1)} \cdots h^{(d)}p_{i, \h}(z; \y^{(i)})}.
\end{equation*}

\subsubsection{Separating the blocks}
For $\mathcal{H} \subseteq \Z^d$, set
\begin{equation*}
    g_i(\alpha ; \mathcal{H}) \eqdef \sum_{\h_i \in \mathcal{H}} \abs{f_{i}(\alpha ; \h_i)},
\end{equation*}
and let $\mathcal{H}_0 \eqdef [-2CX, 2CX]^d \cap \Z^d$. Summing over $\h^{(1)}, \dots, \h^{(d)} \in \Z^u$ is equivalent to summing over $\h_1, \dots, \h_u \in \Z^d$, so the right-hand side above factorises. Using the inequality 
\begin{equation*}
    \abs{a_1 \cdots a_m} \leq \abs{a_1}^m + \dots + \abs{a_m}^m \quad \text{for all } a_1, \dots, a_m \in \C,
\end{equation*}
we arrive at
\begin{equation*}
    \abs{U(\yu)}^{2^d} \ll X^{(2^d - d- 1) u} \abs{\prod_{i = 1}^u g_i(\alpha; \mathcal{H}_0)} \ll X^{(2^d - d- 1) u} \sum_{i = 1}^u g_i(\alpha; \mathcal{H}_0)^u.
\end{equation*}
The sum defining $f_i(\alpha; \h)$ is trivial when $h^{(1)} \cdots h^{(d)} = 0$, so we treat these $\h$ separately. Let
\begin{equation*}
    \Zero \eqdef \set{\h \in \mathcal{H}_0 : h^{(1)} \cdots h^{(d)} = 0}, \quad \Zero^c \eqdef \mathcal{H}_0 \setminus \Zero.
\end{equation*}
Since $\# \Zero \ll X^{d-1}$ and $\abs{f_i(\alpha ; \h)} \ll X$ for $\h \in \Zero$, one can bound
\begin{equation*}
    g_i(\alpha ; \mathcal{H}_0) \ll X^d + g_i(\alpha ; \Zero^c),
\end{equation*}
and hence
\begin{equation*}
    \abs{U(\yu)}^{2^d} \ll X^{(2^d - 1) u} + X^{(2^d - d- 1) u} \sum_{i = 1}^u g_i(\alpha; \Zero^c)^u.
\end{equation*}
Taking $2^d$th roots and recalling that $t = u/2^d$, we arrive at 
\begin{equation}\label{eq:U_intermediate}
    \abs{U(\yu)} \ll X^{u -t} + X^{u - (d+1)t} \sum_{i = 1}^u g_i (\alpha; \Zero^c)^t.
\end{equation}
Note that the right-hand side depends neither on $\beta$ nor on $\bm{\eta}$. 

\subsubsection{The supremum bound}
\cref{lemma:2.3} with $j = k-d$ and $r = d$, $p_{\h} = p_{i, \h}(\cdot; \y^{(i)})$ and $\mathcal{D}_{\h} = I_i(\h)$ gives
\begin{equation}\label{eq:g_Z}
    \sup_{\alpha \in \mi(\theta)} g_i(\alpha ; \Zero^c) \ll X^{d + 1 - \frac{\theta}{\sigma} + \eps},
\end{equation}
uniformly in $1 \leq i \leq u$ and $\yu$. Inserting this into \cref{eq:U_intermediate}, we find that 
\begin{equation*}
     \sup_{\alpha \in \mi(\theta)} \abs{U(\yu)} \ll X^{u -t} + X^{u - \frac{\theta t}{\sigma} + t \eps},
\end{equation*}
and recalling \cref{eq:Ttilde_U} gives
\begin{equation*}
    \sup_{\alpha \in \mi(\theta)} \Tt(\alpha, \beta) \ll X^{nu - \frac{\theta t}{\sigma} + t \eps} = X^{s - \frac{\theta t}{\sigma} + t \eps}.
\end{equation*}
With \cref{eq:T_Ttilde}, this gives \labelcref{ch:sup_bound}.

\subsubsection{The mean value bound}
Inserting \cref{eq:U_intermediate} into \cref{eq:Ttilde_U} and integrating, we arrive at
\begin{equation*}
    \oint \int_{\mi(\theta)} \Tt(\alpha, \beta) \diff \alpha \diff \beta \ll X^{s-t} + X^{s - (d+1)t} \max_{i, \yu} \int_{\mi(\theta)} g_i(\alpha; \Zero^c)^t \diff \alpha.
\end{equation*}
Since $t \geq 2 \tau$, we may write $t = v + 2 \tau$ with $v \geq 0$, and so
\begin{equation*}
    \int_{\mi(\theta)} g_i(\alpha; \Zero^c)^t \diff \alpha \ll \of{\sup_{\alpha \in \mi(\theta)} g_i(\alpha; \Zero^c) }^v \int_{\mi(\theta)}g_i(\alpha; \Zero^c)^{2 \tau} \diff \alpha.
\end{equation*}
The first term is covered by \cref{eq:g_Z}, namely
\begin{equation*}
    \of{\sup_{\alpha \in \mi(\theta)} g_i(\alpha; \Zero^c)}^{v} \ll X^{v(d+1) - \frac{\theta v}{\sigma} + \eps},
\end{equation*}
and for the second term, Hölder's inequality gives
\begin{equation*}
    g_i(\alpha; \Zero^c)^{2 \tau} \ll (X^d)^{2 \tau-1} \sum_{\h \in \Zero^c} \abs{f_i(\alpha; \h)}^{2 \tau},
\end{equation*}
so that the mean value estimate in \cref{thm:2.1} with $m = 1$ and $j = k-d$ gives
\begin{equation*}
    \int_{\mi(\theta)} g_i(\alpha; \Zero^c)^{2 \tau} \diff \alpha \ll X^{(2 \tau-1) d} X^d X^{2 \tau - (k-d) + \eps} \ll X^{2 \tau (d+1)- (k-d) + \eps}.
\end{equation*}
Combining these yields
\begin{align*}
    \int_{\mi(\theta)} g_i(\alpha; \Zero^c)^t \diff \alpha
    & \ll X^{v(d+1) - \frac{\theta v}{\sigma} + \eps} X^{2 \tau (d+1)- (k-d) + \eps} \\
    & \ll X^{t(d+1) -(k-d) - \frac{\theta (t - 2 \tau)}{\sigma} + \eps},
\end{align*}
and hence
\begin{equation*}
    \oint \int_{\mi(\theta)} \Tt(\alpha, \beta) \diff \alpha \diff \beta \ll X^{s-t} + X^{s - k +d - \frac{\theta (t - 2 \tau)}{\sigma} + \eps},
\end{equation*}
which together with \cref{eq:T_Ttilde} proves \labelcref{ch:mv_bound}.

\subsection{Two-dimensional split}\label{sec:minor_split}
We have now shown that unless $\alpha$ is well approximated by rationals, the contribution to the minor arc integral is small. As in \cite{browning_forms_2017}, a similar Weyl argument shows that whenever $\alpha$ has a good rational approximation $a/q$, unless $\beta$ is well approximated with a denominator of the form $qr$, the corresponding contribution will also be small. The following lemma makes this precise.

\begin{lemma}\label{lemma:subdivision}
    Let $\kappa > 0$ and $\theta \in (0, 1)$. Suppose that $\alpha \in \Ma(\theta)$ with corresponding rational approximation $a/q$. For any $\eta$ in the range
    \begin{equation}\label{eq:range_eta}
        0 < \eta \leq 1 - \theta,
    \end{equation}
    one of the following three statements is true:
    \begin{enumerate}[label={(\roman*)}]
        \item Either $T$ is small: \label{opt:1}
        \begin{equation*}
            \abs{T(\alpha, \beta)} \ll X^{s- \kappa \eta + \eps},
        \end{equation*}
        \item or there is a natural number $r \leq X^{(d-1)\eta}$ such that \label{opt:2}
        \begin{equation*}
            \norm{q r \beta} \ll X^{-d + (d-1)\eta + \theta},
        \end{equation*}
        \item or $s$ is small: \label{opt:3}
        \begin{equation*}
            s \leq 2^{d-1} \kappa.
        \end{equation*}
    \end{enumerate}
\end{lemma}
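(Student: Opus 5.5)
This is the analogue of the subdivision lemma in \cite{browning_forms_2017} (their Lemma 4.1 / Lemma 5.1 style argument). The plan is to apply $d-1$ Weyl differencing steps to $T(\alpha,\beta)$ over all $s$ variables, removing all dependence on $G$ except a bilinear term, and then use the rational approximation of $\alpha$ to absorb the $F$-contribution.

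First, split $\xu$ into residue classes modulo $q$. Write $\xu = q\yu + \bm{\rho}$ with $\bm{\rho}\in(\Z/q\Z)^s$. Then $e(\alpha F(\xu)) = e_q(aF(\bm{\rho}))\, e(\gamma F(q\yu+\bm{\rho}))$, where $\gamma = \alpha - a/q$ satisfies $\abs{\gamma}\le q^{-1}X^{-k+\theta}$. Thus $T(\alpha,\beta)$ is a sum over $\bm{\rho}$ of sums over $\yu$ in a box of side $\asymp X/q$. The phase there is $\beta G(q\yu+\bm{\rho})$ plus a smooth perturbation $\gamma F(q\yu+\bm{\rho})$ whose derivatives are small: total variation $\ll X^{\theta}$.

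Next, on each inner sum in $\yu$, apply $d-1$ Weyl differencing steps as in \cite{browning_forms_2017}. The $\beta$-part of the differenced phase becomes $\beta q^{d}$ times the multilinear form $\Gamma_{G}(\h^{(1)},\dots,\h^{(d-1)},\yu)$ plus lower-order terms. The $\gamma$-term is handled by partial summation, at a cost of a factor $\ll X^{\theta}$ (or by keeping the differencing range short). Summing the resulting linear exponential sums over $\yu$ gives
\[
\abs{T}^{2^{d-1}} \ll X^{2^{d-1}s - (d-1)s+\theta\,2^{d-1}+\eps} \sum_{\h}\prod_{j=1}^{s}\min\{X/q,\ \norm{q^d\beta B_j(\h)}^{-1}\},
\]
where $B_j$ are the multilinear components.

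Then run the standard counting argument (Davenport/Birch shrinking lemma) with the parameter $\eta$. There are two possibilities.

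\begin{itemize}
\item If $\abs{T}\ge X^{s-\kappa\eta+\eps}$, then the number of $\h$ in a shrunken box $\abs{\h}\le X^{\eta}$ with $\norm{q\beta B_j(\h)}$ small for all $j$ is $\gg X^{(d-1)s\eta - 2^{d-1}\kappa\eta}$.
\item If $s>2^{d-1}\kappa$ (i.e. option \ref{opt:3} fails), this exceeds the count of $\h$ with $B_j(\h)=0$ for all $j$. That count is $\ll X^{((d-1)s - s)\eta+\eps}$ by non-singularity of $G$, since the singular locus has dimension zero.
\end{itemize}

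Hence some $\h$ with $B(\h)\neq 0$ exists. Taking $r=\abs{B_j(\h)}\ll X^{(d-1)\eta}$ gives $\norm{qr\beta}\ll X^{-d+(d-1)\eta+\theta}$, which is option \ref{opt:2}. The range $\eta\le 1-\theta$ ensures the shrinking step is legitimate: $X^{\eta}\le X/q$, so that the shrunken box fits inside the box of side $X/q$.

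The main obstacle is controlling the perturbation $\gamma F$ during differencing, so that it costs only $X^{\theta}$ and produces exactly the exponent $-d+(d-1)\eta+\theta$. I would follow \cite[\S 4]{browning_forms_2017} closely here, since their Lemma is stated for general $F$ and the argument there does not use the block structure. Indeed, the statement involves neither $n$ nor $\sigma$, so I expect the proof to be essentially quoted from there.
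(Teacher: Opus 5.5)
The paper does not reprove this lemma. Its proof is the one-line citation of \cite[Lemma 6.1]{browning_forms_2017}, not the Lemma 4.1/5.1 or \S 4 you mention. Your closing remark, that nothing about the block structure of $F$ is used, is exactly what justifies quoting it.

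\textbf{The gap in your sketch.} The sketch itself would not prove the lemma in the stated form. If you first write $\xu = \bm{\rho} + q\yu$ and then difference in $\yu$, the shifts in $\xu$-space are $q\h^{(i)}$. After $d-1$ steps the linear coefficient is therefore $q^{d}\beta\, d!\, B_j(\h)$, as your own display records. Counting and shrinking from there gives $\norm{q^{d} r\beta} \ll q^{d}X^{-d+(d-1)\eta}$ with $r \ll X^{(d-1)\eta}$. In your bullet points this silently becomes $\norm{q\beta B_j(\h)}$, but the two are not interchangeable:
\begin{itemize}
\item Absorbing $q^{d-1}$ into $r$ gives $r \ll q^{d-1}X^{(d-1)\eta}$. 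This violates $r \le X^{(d-1)\eta}$, on which the later volume bound for $\Na(\eta)$ relies.
\item Shrinking $\h$ only down to $X^{\eta}/q$ would instead replace (iii) by a condition involving $q$.
\end{itemize}
There is a second problem with the perturbation. Partial summation in all $s$ variables against $e(\gamma F(q\yu+\bm{\rho}))$ costs $(1+X^{\theta}/q)^{s}$, not $X^{\theta}$, since its variation along each coordinate of the $\yu$-box is $\asymp X^{\theta}/q$. Even a loss of $X^{\theta}$ in $\abs{T}$ would already spoil the exponent in (i).

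\textbf{A route that does give the stated form.} Reverse the order of operations.
\begin{enumerate}
\item Difference $d-1$ times in $\xu$ itself, with unrestricted shifts $\abs{\h^{(i)}} \ll X$. The $G$-part is then $\beta\, d!\, \Gamma(\h^{(1)},\dots,\h^{(d-1)},\xu)$ plus a term independent of $\xu$.
\item Split only the final linear sum. On each class $\xu = \bm{\rho}+q\yu$, the term $\frac{a}{q}\Delta_{\h}F$ is constant modulo $1$, because $\Delta_{\h}F$ has integer coefficients.
\item Cut the $\yu$-range into boxes of side $X^{1-\theta}$, which is possible since $q \le X^{\theta}$. The $\yu$-gradient of $\gamma\Delta_{\h}F(\bm{\rho}+q\yu)$ is $\ll q\abs{\gamma}X^{k-1} \le X^{\theta-1}$, so partial summation on each of the $X^{\theta s}$ pieces costs only $O(1)$.
\item Each piece is bounded by $\prod_j \min\{X^{1-\theta}, \norm{q\beta\, d!\, B_j(\h)}^{-1}\}$. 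Hence $\abs{T}^{2^{d-1}} \ll X^{(2^{d-1}-d+1)s+\eps}\,\#\{\abs{\h} \ll X : \norm{q\beta\, d!\, B_j(\h)} < X^{\theta-1} \text{ for all } j\}$.
\item Shrink each $\h^{(i)}$ from $X$ to $X^{\eta}$. Each step multiplies the tolerance by $X^{\eta-1}$ and the count by $X^{(\eta-1)s}$. You end with tolerance $X^{-d+(d-1)\eta+\theta}$ and count $\gg X^{(d-1)s\eta-2^{d-1}\kappa\eta}$.
\item This count beats the $\ll X^{(d-2)s\eta+\eps}$ common zeros of the $B_j$ exactly when $s > 2^{d-1}\kappa$, so some $B_j(\h) \neq 0$ gives (ii).
\end{enumerate}
One technical point in step 5: the initial box-times-tolerance is $X^{\theta} > 1$. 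The first shrink therefore needs Davenport's lemma in the form valid under $Z_1 Z_2 \le 1$ rather than $Z_1 \le 1$. This follows from $\lambda_i\lambda_{2s+1-i} \asymp 1$ for the underlying symplectic lattice. Here $Z_1 Z_2 \asymp X^{\eta+\theta-1}$, and this is where the hypothesis $\eta \le 1-\theta$ genuinely enters.
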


\begin{proof}
    This is \cite[Lemma 6.1]{browning_forms_2017}.
\end{proof}

We assume from now on that
\begin{equation*}
    s > 2^{d-1} \kappa,
\end{equation*}
in order to be able to exclude the case \labelcref{opt:3}. By combining the bound on $T(\alpha, \beta)$ in \labelcref{opt:1} with the supremum bound \labelcref{ch:sup_bound} in \cref{lemma:ch}, one can see that it makes sense to choose $\eta$ and $\theta$ to be related by
\begin{equation}\label{eq:kappa_eta_theta}
    \kappa \eta = \frac{t}{\sigma} \theta.
\end{equation}

Motivated by \cref{lemma:subdivision}, we now define a set $\Na(\eta)$ of two-dimensional major arcs, consisting of those $(\alpha, \beta)$ for which both $\alpha$ and $\beta$ are well approximated in the sense of \labelcref{opt:2}, with complement $\nn(\eta)$. With a suitable choice of the constant $c$ below, every $(\alpha, \beta) \in \nn(\eta)$ with $\alpha \in \Ma(\theta)$ satisfies \labelcref{opt:1}.

\begin{definition}\label{def:najor_arcs}
    For a constant $c > 0$, let $\Na(\eta) = \Na(\eta, \theta)$ be the set of $(\alpha, \beta) \in \T^2$ with the property that 
    \begin{align*}
        \norm{\alpha q} \leq c X^{-k+ \theta}, && \norm{\beta qr} \leq c X^{-d+ (d-1)\eta + \theta}
    \end{align*}
    for some $1 \leq q \leq c X^{\theta}$ and $1 \leq r \leq c X^{(d-1)\eta}$. Define $\nn(\eta) \eqdef \T^2 \setminus \Na(\eta)$. 
\end{definition}

Two remarks are in order:
\begin{enumerate}
    \item Combining the bounds on $T(\alpha, \beta)$ in \labelcref{ch:sup_bound} of \cref{lemma:ch} and in \labelcref{opt:1} using the equality \cref{eq:kappa_eta_theta} gives
    \begin{equation}\label{eq:T_bound_nn}
        \abs{T(\alpha, \beta)} \ll X^{s - \kappa \eta + \eps}
    \end{equation}
    for all $(\alpha, \beta) \in \nn(\eta)$. 
    
    \item Using the equality \cref{eq:kappa_eta_theta} again, one can estimate
    \begin{equation}\label{eq:volume_bound_Na}
        \begin{split}
            \operatorname{vol} \Na(\eta) &\ll \sum_{1 \leq q \leq cX^\theta} \sum_{1 \leq a \leq q} \frac{X^{-k + \theta}}{q} \sum_{1 \leq r \leq c X^{(d-1) \eta}} \sum_{1 \leq b \leq qr} \frac{X^{-d + (d-1)\eta + \theta}}{qr} \\
            & \ll X^{-k - d + 2(d-1) \eta + 3 \theta} \\
            & \ll X^{-k-d + \of{2(d-1) + \frac{3 \kappa \sigma }{t}}\eta}.
        \end{split}
    \end{equation}
\end{enumerate}

\subsection{Two-dimensional minor arcs}\label{sec:minor_minor}
Next, we can apply \cref{lemma:minor_mi} and \cref{lemma:subdivision} together with the estimates in \cref{eq:T_bound_nn} and \cref{eq:volume_bound_Na} to bound the integral over $\nn(\eta)$.

\begin{lemma}\label{lemma:minor_nn}
    Suppose that \cref{eq:req_temp_1} holds, together with 
    \begin{equation}\label{eq:kappa_n_1}
        \frac{d}{\kappa} + \frac{(d+2)\sigma}{t} < 1
    \end{equation}
    and 
    \begin{equation}\label{eq:kappa_n_2}
        \frac{2(d-1)}{\kappa} + \frac{3 \sigma}{t} < 1.
    \end{equation}
    Then for any $\eta$ in the range
    \begin{equation}\label{eq:eta_range_kappa}
        0 < \eta \leq \of{1 + \frac{\sigma \kappa}{t}}^{-1},
    \end{equation}
    the integral over the two-dimensional minor arcs can be bounded by
    \begin{equation*}
        \int_{\nn(\eta)} \abs{T(\alpha, \beta)} \diff \alpha \diff \beta \ll X^{s- k-d - \nu}
    \end{equation*}
    for some $\nu > 0$.
\end{lemma}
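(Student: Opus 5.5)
We follow the dyadic-pruning strategy of \cite[Lemma 6.2]{browning_forms_2017}, adapted as in \cite{brandes_hasse_2021}. Put $\theta_* \eqdef \sigma\kappa\eta_*/t$, where $\eta_* \eqdef (1+\sigma\kappa/t)^{-1}$ is the right endpoint of \cref{eq:eta_range_kappa}. With this choice $\eta_* + \theta_* = 1$, so every pair $(\eta,\theta)$ linked by \cref{eq:kappa_eta_theta} with $\eta \le \eta_*$ satisfies \cref{eq:range_eta}. We split $\nn(\eta)$ according to whether $\alpha \in \mi(\theta_*)$ or $\alpha \in \Ma(\theta_*)$.

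\textbf{Step 1: the part with $\alpha \in \mi(\theta_*)$.} Here we apply \cref{lemma:minor_mi}, which requires $(t/\sigma - 2)\theta_* > d$. Writing $x \eqdef \sigma\kappa/t$, this reads $(\kappa/x - 2)\cdot x/(1+x) > d$, that is, $\kappa > d + (d+2)x$. Dividing by $\kappa$, this is exactly \cref{eq:kappa_n_1}. Hence this part contributes $\ll X^{s-k-d-\nu}$.

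\textbf{Step 2: the part with $\alpha \in \Ma(\theta_*)$.} Consider a decreasing chain $\eta_* = \eta_0 > \eta_1 > \dots > \eta_N = \eta$ with $N = O(1)$ and small steps, and set $\theta_i \eqdef \sigma\kappa\eta_i/t$. Then
\[
\nn(\eta) \subseteq \nn(\eta_0) \cup \bigcup_{i=1}^N \of{\Na(\eta_{i-1}) \setminus \Na(\eta_i)}.
\]
On $\nn(\eta_i)$ with $\alpha \in \Ma(\theta_*)$ we have \cref{eq:T_bound_nn}, that is, $\abs{T} \ll X^{s-\kappa\eta_i+\eps}$. This uses \cref{lemma:subdivision} with option \labelcref{opt:3} excluded, together with the supremum bound \labelcref{ch:sup_bound} for those $\alpha$ lying in $\mi(\theta_i)$. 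Combining this with the volume bound \cref{eq:volume_bound_Na} for $\Na(\eta_{i-1})$, each piece is bounded by
\[
X^{s-k-d + \left(2(d-1)+3\kappa\sigma/t\right)\eta_{i-1} - \kappa\eta_i + \eps}.
\]
This is acceptable once the steps $\eta_{i-1}-\eta_i$ are small enough, provided $2(d-1) + 3\kappa\sigma/t < \kappa$, which is precisely \cref{eq:kappa_n_2}.

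\textbf{Step 3: the outermost piece.} For $\nn(\eta_0)$ with $\alpha \in \Ma(\theta_*)$ we use the same pointwise bound $X^{s-\kappa\eta_*+\eps}$ together with the trivial volume $1$. This is not obviously enough, so we cover this piece instead by the first split: any point with $\alpha \in \mi(\theta_*)$ is already handled by Step 1. The leftover points have $\alpha \in \Ma(\theta_*)$ and $\beta$ poorly approximable. Applying \cref{lemma:subdivision} with $\eta = \eta_*$ gives $\abs{T} \ll X^{s-\kappa\eta_*+\eps}$ there, and we integrate this over the set $\Ma(\theta_*) \times \T$, whose volume is $\ll X^{-k+2\theta_*}$. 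The resulting exponent is
\[
s - k + 2\theta_* - \kappa\eta_* = s - k - (\kappa - 2x)\eta_*.
\]
We then need $(\kappa - 2x)/(1+x) > d$, which is again implied by \cref{eq:kappa_n_1}.

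\textbf{Main obstacle.} The delicate part is the bookkeeping at the ends of the chain: the relation \cref{eq:kappa_eta_theta} must be maintained so that \cref{lemma:subdivision} applies at every step, and at the outermost piece the volume must be measured by $\Ma(\theta_*)$ rather than by $\Na(\eta_*)$. We expect these two points, rather than the exponent arithmetic itself, to require the most care.
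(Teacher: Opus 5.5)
Your proof is correct and takes the same route as the paper, which defers to the pruning argument of \cite[Lemma 3.4]{brandes_hasse_2021} with \cref{lemma:minor_mi} in place of \cite[Lemma 3.2]{brandes_hasse_2021}, starting from $\eta_{\max} = (1+\sigma\kappa/t)^{-1}$ and $\theta_{\max} = \sigma\kappa/(t+\sigma\kappa)$ exactly as you do: there \cref{eq:theta_star} at $\theta_{\max}$ is equivalent to \cref{eq:kappa_n_1}, and the chain of volume estimates runs on \cref{eq:kappa_n_2}. You have written out the details the paper leaves to that reference, including the outermost piece $\nn(\eta_*) \cap (\Ma(\theta_*) \times \T)$, which again reduces to \cref{eq:kappa_n_1}.
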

\begin{proof}
    This is essentially \cite[Lemma 3.4]{brandes_hasse_2021}, with \cref{lemma:minor_mi} replacing \cite[Lemma 3.2]{brandes_hasse_2021}. The completion of the proof requires a $\theta_*$ satisfying \cref{eq:theta_star} (and hence a suitable $\eta_*$). Since the hypothesis \cref{eq:range_eta} of \cref{lemma:subdivision} is equivalent to \cref{eq:eta_range_kappa} under the assumption \cref{eq:kappa_eta_theta} on $\kappa$, $\eta$ and $\theta$, taking
    \begin{align*}
        \eta_{\max} \eqdef \of{1 + \frac{\sigma \kappa}{t}}^{-1}, && \theta_{\max} \eqdef \frac{\sigma \kappa}{t + \sigma \kappa }, 
    \end{align*}
    gives a permissible $\theta_{\max}$, and the requirement from \cref{eq:theta_star}, namely
    \begin{equation*}
        \theta_{\max} > \frac{d \sigma}{t- 2\sigma},
    \end{equation*}
    is then equivalent to \cref{eq:kappa_n_1}.
\end{proof}

It remains to choose $\kappa$. Since $t = s/(2^d n)$, the conditions \cref{eq:kappa_n_1,eq:kappa_n_2} can be satisfied by some $\kappa$ when $s > s_2$ and $s > s_3$, respectively. Since both hold by \cref{rmk:s2_s3}, we may choose $\kappa < 2^{1-d}s$ to satisfy \cref{eq:kappa_n_1,eq:kappa_n_2}; this also ensures that $s > 2^{d -1} \kappa$. 

To summarise, \cref{lemma:minor_mi} shows that the contribution from the minor arcs $\mi(\theta)$ is suitably small, and \cref{lemma:subdivision} ensures that for any $\eta$, either $(\alpha, \beta)$ lies on the major arcs $\Na(\eta)$, or $T(\alpha, \beta)$ is small (in terms of $\kappa$ and $\eta$). By \cref{lemma:minor_nn}, for small enough $\eta$, the contribution from the two-dimensional minor arcs $\nn(\eta)$ is acceptable. Hence for any sufficiently small $\eta > 0$, we have
\begin{equation}\label{eq:only_major_left}
    N_{F, G}(X) = \int_{\Na(\eta)} T(\alpha, \beta) \diff \alpha \diff \beta + O \of{X^{s - k-d - \nu}}. 
\end{equation}
What remains is then to evaluate the contribution from the major arcs $\Na(\eta)$, which is the content of the next section.

\section{Major arc contribution}\label{sec:major}

The analysis of the major arcs is performed as in \cite[\S 4]{brandes_hasse_2021}. We begin by enlarging the set of major arcs.

\begin{definition}
    Set
    \begin{equation}\label{eq:omega}
        \omega \eqdef (d-1)\eta + \theta.
    \end{equation}
    Let $\Pa(\omega)$ be the set of $(\alpha, \beta)\in \T^2$ with the property that there is a natural number $q \leq c' X^{\omega}$, a pair $(a, b) \in \Z^2$ and a pair $(\gamma, \delta) \in \R^2$ such that
    \begin{equation}\label{eq:alphas_major}
        \alpha = \frac{a}{q} + \gamma \quad \text{and} \quad \beta = \frac{b}{q} + \delta,
    \end{equation}
    with
    \begin{equation*}
        \abs{\gamma} = \abs{\alpha - \frac{a}{q}} \leq c' X^{-k + \omega} \quad \text{and} \quad \abs{\delta} = \abs{\beta - \frac{b}{q}} \leq c' X^{-d + \omega}
    \end{equation*}
    for some suitable constant $c'$.
\end{definition}

Two remarks are again in order:
\begin{enumerate}
    \item Looking at \cref{def:najor_arcs}, it is clear that we can pick $c'$ such that $\Na(\eta) \subseteq \Pa(\omega)$. Then \cref{lemma:minor_nn} (under the hypotheses \cref{eq:req_temp_1,eq:req_temp_2,eq:req_temp_3}) shows that there is a $\nu > 0$ such that
    \begin{equation*}
        \int_{\Pa(\omega) \setminus \Na(\eta)} T(\alpha, \beta) \diff \alpha \diff \beta \ll \int_{\nn(\eta)} \abs{T(\alpha, \beta)} \diff \alpha \diff \beta \ll X^{s-k-d - \nu}.
    \end{equation*}
    \item We can compute directly:
    \begin{equation*}
        \operatorname{vol} \Pa(\omega) \ll \sum_{1 \leq q \leq c' X^{\omega}} \sum_{1 \leq a \leq q} c' X^{-k + \omega} \sum_{1 \leq b \leq q} c' X^{-d + \omega} \ll X^{-k-d + 5 \omega}.
    \end{equation*}
\end{enumerate}

Let 
\begin{equation*}
    S(q; a, b) \eqdef \sum_{\xu \Mod q} e_q \of{a F(\xu) + b G(\xu)}
\end{equation*}
and recall the definition of $J(\gamma, \delta)$ in \cref{eq:J_def}.

To make the notation simpler, set
\begin{equation}\label{eq:J_X}
    J_X(\gamma, \delta) \eqdef X^s J(X^k \gamma, X^d \delta) = \int_{[-X, X]^s} e \of{\gamma F(\bm{\xi}) + \delta G(\bm{\xi})} \diff \bm{\xi}.
\end{equation}

\begin{lemma}\label{lemma:rewrite_major_int_1}
    The integral over the major arcs can be written as
    \begin{equation*}
    \begin{split}
        \int_{\Pa(\omega)} T(\alpha, \beta) \diff \alpha \diff \beta & = \sum_{q \leq c' X^{\omega}} q^{-s} \sum_{\substack{1 \leq a, b \leq q \\ (q, a, b) = 1}} S(q; a, b) \int_{\substack{\abs{\gamma} \leq c' X^{-k + \omega} \\ \abs{\delta} \leq c' X^{-d + \omega}}} J_{X} (\gamma, \delta) \diff \gamma \diff \delta 
        \\
        & \quad{} + O \of{X^{s-k-d-1 + 7 \omega}}.
    \end{split}
    \end{equation*}
\end{lemma}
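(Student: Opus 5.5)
This is the standard major-arc approximation, and the plan is to follow \cite[\S 4]{brandes_hasse_2021} and \cite{browning_forms_2017}. The proof has three steps: show the arcs of $\Pa(\omega)$ are disjoint, approximate $T$ on each arc by $q^{-s}S(q;a,b)J_X(\gamma,\delta)$, and integrate the error.

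\textbf{Step 1: disjointness.} Write each point of $\Pa(\omega)$ as $\alpha = a/q+\gamma$, $\beta = b/q+\delta$ with $(q,a,b)=1$. We may reduce to $(q,a,b)=1$ by cancelling common factors, since this only decreases $q$ and keeps the constraints. Suppose two such representations with $q,q' \le c'X^{\omega}$ describe the same point. Then
\begin{equation*}
    \abs{a/q - a'/q'} \le 2c'X^{-k+\omega}, \qquad \abs{b/q-b'/q'} \le 2c'X^{-d+\omega}.
\end{equation*}
If $(a/q,b/q)\ne(a'/q',b'/q')$, one of these differences is at least $1/(qq') \ge c'^{-2}X^{-2\omega}$. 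This contradicts the bounds once $3\omega < d$, which holds for $\eta$ (hence $\theta$ and $\omega$) sufficiently small, as we are free to assume by \cref{eq:only_major_left}. Hence the integral over $\Pa(\omega)$ splits as a sum over $q \le c'X^{\omega}$ and $1\le a,b\le q$ with $(q,a,b)=1$ of integrals over the boxes $\abs{\gamma}\le c'X^{-k+\omega}$, $\abs{\delta}\le c'X^{-d+\omega}$.

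\textbf{Step 2: local approximation of $T$.} On a single box, split $\xu = q\y + \z$ with $\z$ running over residues mod $q$. Then
\begin{equation*}
    e\of{\alpha F(\xu)+\beta G(\xu)} = e_q\of{aF(\z)+bG(\z)}\, e\of{\gamma F(q\y+\z)+\delta G(q\y+\z)}.
\end{equation*}
For each fixed $\z$, replace the sum over $\y$ by an integral. Changing variables $\bm{\xi} = q\y+\z$ gives $q^{-s}J_X(\gamma,\delta)$. The error comes from the derivative of the phase and from the boundary. The phase has $\bm{\xi}$-derivative $\ll \abs{\gamma}X^{k-1}+\abs{\delta}X^{d-1} \ll X^{-1+\omega}$. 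Per unit $\y$-cube, in $\y$ units, this gives an error $\ll qX^{-1+\omega}$. Summing over the $(X/q)^s$ cubes in $\y$ and over the $q^s$ residues $\z$ gives
\begin{equation*}
    T(\alpha,\beta) = q^{-s}S(q;a,b)J_X(\gamma,\delta) + O\of{X^{s-1}q(1+qX^{-1+\omega}X)} = q^{-s}S(q;a,b)J_X(\gamma,\delta) + O\of{X^{s-1+2\omega}},
\end{equation*}
where the boundary contribution $O(qX^{s-1})$ is absorbed. This is the standard estimate, e.g.\ as in \cite[Lemma 4.1]{brandes_hasse_2021} or by partial summation in each coordinate.

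\textbf{Step 3: integrating the error.} Multiply the error $X^{s-1+2\omega}$ by $\operatorname{vol}\Pa(\omega) \ll X^{-k-d+5\omega}$. This gives $O(X^{s-k-d-1+7\omega})$, as claimed.

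The main point requiring care is Step 2: getting the uniform error $X^{s-1+2\omega}$ without losing extra powers of $q$. I would first try bounding each $\y$-sum against the integral coordinate by coordinate, using the mean value theorem on each unit cube.
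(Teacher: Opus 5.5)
Your proposal is correct and follows the paper's proof, which uses the same pointwise approximation $\abs{T(\alpha,\beta)-q^{-s}S(q;a,b)J_X(\gamma,\delta)} \ll X^{s-1}q(1+X^k\abs{\gamma}+X^d\abs{\delta}) \ll X^{s-1+2\omega}$ and multiplies it by $\operatorname{vol}\Pa(\omega)\ll X^{-k-d+5\omega}$; the paper leaves implicit the disjointness of the arcs that you check in Step 1. One arithmetic slip: your displayed error $X^{s-1}q(1+qX^{-1+\omega}X)$ has an extra factor of $q$, which taken literally gives $X^{s-1+3\omega}$, whereas the count you describe (per-cube error $qX^{-1+\omega}$, times $(X/q)^s$ cubes, times $q^s$ residues) gives $qX^{s-1}(1+X^{\omega})\ll X^{s-1+2\omega}$, and it is this bound that yields the exponent $7\omega$.
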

\begin{proof}
    For each $(\alpha, \beta) \in \Pa(\omega)$, writing $\alpha$ and $\beta$ as in \cref{eq:alphas_major} gives
    \begin{equation}\label{eq:rewrite_S_q_J}
        \abs{T(\alpha, \beta) - q^{-s} S(q; a, b) J_{X}(\gamma, \delta)} \ll X^{s-1} q \of{1 + X^k \abs{\gamma} + X^d \abs{\delta}},
    \end{equation}
    and the right-hand side is $\ll X^{s-1+ 2 \omega}$ for $(\alpha, \beta) \in \Pa(\omega)$ by construction.
\end{proof}

\begin{definition}
    For a large number $R$, define the truncated singular series
    \begin{equation*}
        \Series(R) \eqdef \sum_{1 \leq q \leq R} q^{-s} \sum_{\substack{1 \leq a, b \leq q \\ (q, a, b) = 1}} S(q; a, b),
    \end{equation*}
   and the truncated singular integral
    \begin{equation*}
        \Jintegral(R) \eqdef \int_{\substack{\abs{\gamma} \leq R \\ \abs{\delta} \leq R}} J(\gamma, \delta) \diff \gamma \diff \delta.
    \end{equation*}
\end{definition}

A change of variables $\gamma' = X^k \gamma$ and $\delta' = X^d \delta$ in \cref{lemma:rewrite_major_int_1}, combined with \cref{eq:J_X}, gives
\begin{equation}\label{eq:rewrite_major_int_2}
    \int_{\Pa(\omega)} T(\alpha, \beta) \diff \alpha \diff \beta = X^{s - k-d} \Series(c'X^\omega) \Jintegral(c'X^{\omega}) + O \of{X^{s - k-d-1 + 7 \omega}}.
\end{equation}

\begin{lemma}\label{lemma:J}
    Let $J(\gamma, \delta)$ and $\Jintegral(R)$ be as above.
    \begin{enumerate}[label={(\roman*)}]
        \item For any $(\gamma, \delta) \in \R^2$, we have \label{item:J_bound}
        \begin{equation*}
            \abs{J(\gamma, \delta)} \ll \min \set{1, \abs{\gamma}^{-\frac{t}{\sigma} + \eps}, \abs{\delta}^{- \of{\frac{d-1}{\kappa} + \frac{\sigma}{t}}^{-1} + \eps}}.
        \end{equation*}
        \item Suppose that \cref{eq:kappa_n_2} holds. Then the limit $\Jintegral\eqdef \lim_{R \to \infty} \Jintegral(R)$ exists, and there is a $\nu > 0$ such that \label{item:J_conv}
        \begin{equation*}
            \abs{\Jintegral(2R) - \Jintegral(R)} \ll R^{- \nu}. 
        \end{equation*}
    \end{enumerate}
\end{lemma}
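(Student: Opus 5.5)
The plan is to deduce the bounds on $J(\gamma, \delta)$ from the bounds on $T(\alpha, \beta)$ already established, by a limiting argument, as in \cite[\S 4]{brandes_hasse_2021} and \cite[\S 7]{browning_forms_2017}.

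\emph{Step 1: the trivial bound.} The bound $\abs{J(\gamma,\delta)} \le 2^s \ll 1$ is immediate from \cref{eq:J_def}.

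\emph{Step 2: transfer from $T$ to $J$.} For given $(\gamma,\delta)$ with $\max\{\abs{\gamma},\abs{\delta}\}\ge 1$, choose $X$ large and put $\alpha = X^{-k}\gamma$, $\beta = X^{-d}\delta$, so that $q=1$, $a=b=0$. Then \cref{eq:rewrite_S_q_J} gives
\begin{equation*}
X^s J(\gamma,\delta) = T(\alpha,\beta) + O\of{X^{s-1}(1+\abs{\gamma}+\abs{\delta})}.
\end{equation*}
Any bound $\abs{T(\alpha,\beta)} \ll X^{s}\,\Phi$ therefore transfers to $J$, up to this error, which is negligible once $X$ is much larger than $\abs{\gamma}+\abs{\delta}$.

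\emph{Step 3: the $\gamma$-bound.} Choose $\theta$ with $X^{\theta} \asymp \abs{\gamma}$, so that $\theta \in (0,1]$ for $X$ in a suitable range relative to $\abs{\gamma}$. Then $\alpha = \gamma X^{-k}$ lies just outside the arc $\Ma(q,a;\theta)$ with $q=1$. A Dirichlet argument shows that it has no other approximation with $q \le X^\theta$ placing it in $\Ma(\theta)$, since $\abs{\alpha}$ is tiny. After adjusting $\theta$ by a constant factor, we get $\alpha \in \mi(\theta)$. The supremum bound \labelcref{ch:sup_bound} of \cref{lemma:ch} then gives $\abs{T} \ll X^{s-\frac{t}{\sigma}\theta+\eps} \asymp X^{s+\eps}\abs{\gamma}^{-t/\sigma}$. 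Dividing by $X^s$ and letting the $X^\eps$ be absorbed as $\abs{\gamma}^{\eps}$ (choose $\theta$ so that $X$ is a fixed power of $\abs{\gamma}$) yields $\abs{J} \ll \abs{\gamma}^{-t/\sigma+\eps}$.

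\emph{Step 4: the $\delta$-bound.} We may assume $\abs{\gamma}$ is small compared to $\abs{\delta}$, since otherwise Step 3 already suffices. Take $\alpha \in \Ma(\theta)$ with $q=1$, and apply \cref{lemma:subdivision} with $\kappa\eta = \frac{t}{\sigma}\theta$ as in \cref{eq:kappa_eta_theta}. Choose $\eta$ so that $X^{\omega}$ with $\omega=(d-1)\eta+\theta = \eta\of{d-1+\frac{\kappa\sigma}{t}}$ is a small constant multiple of $\abs{\delta}$. Then $\beta=\delta X^{-d}$ violates \labelcref{opt:2}: any $r\le X^{(d-1)\eta}$ has $\norm{r\beta}=r\abs{\delta}X^{-d}$, which is too large, because $\abs{\delta} > X^{\omega}$ up to constants. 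Option \labelcref{opt:3} is excluded by $s>2^{d-1}\kappa$. Hence $\abs{T}\ll X^{s-\kappa\eta+\eps}$, and
\begin{equation*}
\kappa\eta = \omega\of{\frac{d-1}{\kappa}+\frac{\sigma}{t}}^{-1}
\end{equation*}
gives $\abs{J} \ll \abs{\delta}^{-\of{\frac{d-1}{\kappa}+\frac{\sigma}{t}}^{-1}+\eps}$. The admissible ranges of $\theta \le 1$ and of $\eta$ in \cref{eq:range_eta} should be checked. Note that $X$ may be taken as any suitable power of $\abs{\delta}$, so these ranges are attainable.

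\emph{Step 5: convergence of the truncated integral.} For part~\labelcref{item:J_conv}, bound $\abs{\Jintegral(2R)-\Jintegral(R)}$ by the integral of $\abs{J}$ over $\{R<\max(\abs{\gamma},\abs{\delta})\le 2R\}$. Interpolate the bounds of part~\labelcref{item:J_bound} via $\min\{A,B\}\le A^{\lambda}B^{1-\lambda}$:
\begin{equation*}
\abs{J} \ll \abs{\gamma}^{-\lambda t/\sigma+\eps}\,\abs{\delta}^{-(1-\lambda)\rho+\eps}, \qquad \rho \eqdef \of{\frac{d-1}{\kappa}+\frac{\sigma}{t}}^{-1}.
\end{equation*}
This is integrable over the dyadic region with decay $R^{-\nu}$ provided both exponents exceed $1$, i.e.\ $\frac{1}{t/\sigma}+\frac{1}{\rho}<1$. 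This condition reads $\frac{2\sigma}{t}+\frac{d-1}{\kappa}<1$, which is implied by \cref{eq:kappa_n_2}. Summing the dyadic differences then gives the existence of the limit $\Jintegral$.

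\emph{Main obstacle.} I expect the main obstacle to be Step 4: verifying that $\beta$ genuinely fails \labelcref{opt:2} for every $r$ and every admissible $q$. Since $\alpha$ is fixed with $q=1$, \cref{lemma:subdivision} in principle allows any $q\le X^\theta$ approximating $\alpha$. This has to be ruled out using the tiny size of $\alpha$, together with the compatibility of the parameter ranges.
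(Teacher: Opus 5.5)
Your proposal is correct and follows essentially the route the paper takes by citing Brandes--Parsell's Lemmas 4.1 and 4.2: compare $J$ with $T$ at $q=1$ via \cref{eq:rewrite_S_q_J}, take the $\gamma$-decay from the supremum bound of \cref{lemma:ch}, take the $\delta$-decay from \cref{lemma:subdivision} with $\kappa\eta = t\theta/\sigma$ (your identity $\kappa\eta = \omega\rho$ is where the exponent comes from), and integrate an interpolated bound. A few small repairs are needed. First, because of the $X^{-1}\abs{\delta}$ error term, Step 3 gives the $\gamma$-bound only when $\abs{\delta}$ is at most a fixed power of $\abs{\gamma}$, so in the range $\abs{\gamma} \le c\abs{\delta}^{\rho\sigma/t}$ (with your $\rho$) you should deduce it from Step 4, where $\abs{\delta}^{-\rho} \ll \abs{\gamma}^{-t/\sigma}$. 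Second, in Step 5 you must interpolate $\min\{1,\abs{\gamma}^{-t/\sigma}\}$ and $\min\{1,\abs{\delta}^{-\rho}\}$, since the bare product is not integrable near the axes. Third, your ``main obstacle'' does not arise: for small $\omega$ one has $\norm{qr\beta} = qr\abs{\delta}X^{-d} \geq \abs{\delta}X^{-d}$ for every admissible $q$ and $r$, so alternative \labelcref{opt:2} fails whichever approximation to $\alpha$ is used.
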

\begin{proof}
    This is:
    \begin{enumerate}[label={(\roman*)}]
        \item \cite[Lemma 4.1]{brandes_hasse_2021} with $\rho = 1$, and 
        \item the analogue of \cite[Lemma 4.2]{brandes_hasse_2021} using \labelcref{item:J_bound}.
    \end{enumerate}
\end{proof}

\begin{lemma}\label{lemma:S}
    Let $S(q; a, b)$ and $\Series(R)$ be as above.
    \begin{enumerate}[label={(\roman*)}]
        \item For any $q \in \N$ and any $(a, b) \in (\Z/q\Z)^2$ with $(q, a, b) = 1$, we have \label{item:S_bound}
        \begin{equation*}
            q^{-s} \abs{S(q; a, b)} \ll q^{\eps} \min \set{\of{\frac{q}{(q, a)}}^{-\frac{t}{\sigma}}, q^{- \of{\frac{d-1}{\kappa} + \frac{\sigma}{t}}^{-1}}}.
        \end{equation*}
        \item Suppose that \cref{eq:kappa_n_2} holds. Then the limit $\Series \eqdef \lim_{R \to \infty} \Series(R)$ exists, and there is a $\nu > 0$ such that \label{item:S_conv}
        \begin{equation*}
            \abs{\Series(2R) - \Series(R)} \ll R^{- \nu}. 
        \end{equation*}
    \end{enumerate}
\end{lemma}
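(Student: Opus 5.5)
For part (i), the plan is to read off the bound on complete exponential sums from the bounds on $T(\alpha,\beta)$ we already have, as in \cite[Lemma 4.3]{brandes_hasse_2021}. Fix $q$ and $(a,b)$ with $(q,a,b)=1$. Choose $X$ to be a suitable power of $q$, say $X = q^{L}$ for large $L$. Set $\alpha = a/q$ and $\beta = b/q$ exactly, so that $\gamma=\delta=0$. Then $T(a/q,b/q)$ splits into complete residue sums. Writing $2X+1 = mq + \rho$ with $0 \le \rho < q$, we get
\begin{equation*}
    T(a/q, b/q) = (X/q)^s S(q;a,b)\bigl(1+O(q/X)\bigr) + O\bigl(q^{s} X^{s-1}/q^{s-1}\bigr).
\end{equation*}
Equivalently, this is \cref{eq:rewrite_S_q_J} with $\gamma=\delta=0$: it gives $q^{-s}|S(q;a,b)| \ll X^{-s}|T(a/q,b/q)| + q X^{-1}$. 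The error term is negligible once $L$ is large.

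For the first term in the minimum, write $a/q = a'/q'$ in lowest terms, where $q' = q/(q,a)$. Choose $X$ with $q' = X^{\theta}$ for some $\theta\in(0,1)$, chosen slightly smaller than exactly $1/L$ so that $q'$ just exceeds $X^{\theta'}$ for a marginally smaller $\theta'$. By Dirichlet, the point $a'/q'$ then lies in $\mi(\theta')$: any approximation $c/r$ with $r \le X^{\theta'} < q'$ and $|a'/q' - c/r| \le r^{-1}X^{-k+\theta'}$ is impossible, because $|a'/q'-c/r| \ge 1/(rq')$ and $q' < X^{k-\theta'}$. Lemma \ref{lemma:ch}\ref{ch:sup_bound} then gives $|T| \ll X^{s - t\theta'/\sigma+\eps}$, that is, $q^{-s}|S| \ll (q')^{-t/\sigma + \eps'}$. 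If $q' = 1$ the bound is trivial.

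For the second term, I would use Lemma \ref{lemma:subdivision} at $\alpha=a/q$. Again take $X$ a power of $q$, and choose $\eta,\theta$ with $\kappa\eta = t\theta/\sigma$ so that $q$ is just too large for option \ref{opt:2} or for $\alpha\in\Ma(\theta)$ with a smaller denominator. Concretely, set $\omega = (d-1)\eta+\theta = \eta\bigl((d-1)+\kappa\sigma/t\bigr)$ and impose $q \asymp X^{\omega}$ (slightly larger). The two cases then run as follows.
\begin{itemize}
    \item[] If $a/q \in \mi(\theta)$, the supremum bound gives $X^{s-t\theta/\sigma} = X^{s-\kappa\eta}$.
    \item[] Otherwise $a/q$ has an approximation with denominator $q_1\le X^{\theta}$. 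Here $|a/q - a_1/q_1|\le q_1^{-1}X^{-k+\theta}$ together with $q<X^{k-\theta}$ forces $a/q=a_1/q_1$, so $q_1 = q/(q,a)$. Since $(q,a,b)=1$, option \ref{opt:2} would require $q \mid q_1 r$ with $q_1r \le X^{\omega}$, which is false for $q$ slightly larger than $X^{\omega}$. We use here that $\|q_1 r b/q\|$ is a nonzero multiple of $1/q$, which exceeds $X^{-d+\omega}$.
\end{itemize}
Option \ref{opt:3} is excluded by $s>2^{d-1}\kappa$. Hence $|T|\ll X^{s-\kappa\eta+\eps}$, and $\kappa\eta = \omega\,(\tfrac{d-1}{\kappa}+\tfrac{\sigma}{t})^{-1}$ gives the claimed exponent. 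The ranges $\theta\in(0,1)$ and $\eta\le1-\theta$ hold since $\eta,\theta$ are small multiples of $1/L$.

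For part (ii), I would follow \cite[Lemma 4.4]{brandes_hasse_2021}. Sum (i) over $a,b$ in dyadic ranges $q\in(R,2R]$. Using the second bound gives $\sum_{q\sim R} q^{2-\lambda+\eps}$, where $\lambda = (\frac{d-1}{\kappa}+\frac{\sigma}{t})^{-1}$. Condition \cref{eq:kappa_n_2} says $\frac{2(d-1)}{\kappa}+\frac{3\sigma}{t}<1$. This alone is not enough for $\lambda>3$, so we interpolate, taking the geometric mean of both bounds in the minimum. The $a$-sum is organised by $e=(q,a)$: $\sum_{a}(q/(q,a))^{-t/(2\sigma)}$ is $\ll q^{\eps}\max(1,q^{1-t/(2\sigma)})$. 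The resulting exponent is $q^{1 + 1 - t/(2\sigma)\cdot\text{(share)} - \lambda/2 }$, and it is negative precisely when $\frac{3}{2}\cdot$ interpolation matches \cref{eq:kappa_n_2}. Tuning the weights $\frac{1}{3}$ for the $\alpha$-bound and $\frac{2}{3}$ for the $\lambda$-bound should reproduce exactly $2(d-1)/\kappa+3\sigma/t<1$. I expect this bookkeeping to be the main obstacle.
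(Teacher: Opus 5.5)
Your part (i) is the standard argument behind \cite[Lemma 4.3]{brandes_hasse_2021}, which is all the paper cites, and it goes through. With $X$ a large power of $q$, the point $a/q$ lies in $\mi(\theta)$ as soon as $X^{\theta}<q/(q,a)$. In the second bound, $(q,a,b)=1$ rules out option \labelcref{opt:2} of \cref{lemma:subdivision} once $q>X^{\omega}$, and $\kappa\eta=\lambda\omega$ with $\lambda=(\frac{d-1}{\kappa}+\frac{\sigma}{t})^{-1}$. One small point: in the first bound your $\theta$ varies with $q/(q,a)$, so you need \labelcref{ch:sup_bound} uniformly in $\theta$. Restricting $\theta$ to a fixed finite grid achieves this at the cost of a factor $q^{\eps}$.

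The genuine gap is in part (ii), in the bookkeeping you left open: the weights you propose do not give \cref{eq:kappa_n_2}. Put exponent $\frac13$ on $(q/(q,a))^{-t/\sigma}$ and $\frac23$ on $q^{-\lambda}$. Then the $a$-sum is $\ll q^{\eps}$ (since $t>3\sigma$), so each $q$ contributes $q^{1-2\lambda/3+\eps}$ and the block $R<q\le 2R$ is $\ll R^{2-2\lambda/3+\eps}$. This needs $\lambda>3$, i.e.\ $\frac{3(d-1)}{\kappa}+\frac{3\sigma}{t}<1$. That is strictly stronger than \cref{eq:kappa_n_2}, and it is exactly the condition you yourself noted is unavailable. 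The geometric mean is worse still, requiring $\lambda>4$.

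The right choice is weight exactly $\sigma/t$ on the first bound and the remaining weight on $q^{-\lambda}$. This is the smallest weight for which the $a$-sum is only $q^{\eps}$, once you group $a$ by $q'=q/(q,a)$, with $\phi(q')$ residues for each (Euler's $\phi$). Using $\min\set{A,B}\le A^{\sigma/t}B^{1-\sigma/t}$, so that $A^{\sigma/t}=q'^{-1}$, you get
\[
\sum_{\substack{1\le a,b\le q\\ (q,a,b)=1}} q^{-s}\abs{S(q;a,b)}
\ll q^{1+\eps-(1-\frac{\sigma}{t})\lambda}\sum_{q'\mid q}\frac{\phi(q')}{q'}
\ll q^{1-(1-\frac{\sigma}{t})\lambda+2\eps}.
\]
Hence $\abs{\Series(2R)-\Series(R)}\ll R^{2-(1-\sigma/t)\lambda+2\eps}$. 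The condition $(1-\frac{\sigma}{t})\lambda>2$ rearranges to $1-\frac{\sigma}{t}>\frac{2(d-1)}{\kappa}+\frac{2\sigma}{t}$, which is precisely \cref{eq:kappa_n_2}. Summing over dyadic blocks then gives the existence of $\Series$. Equivalently, you can split the $a$-sum at $q/(q,a)=q^{\lambda\sigma/t}$.
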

\begin{proof}
    This is:
    \begin{enumerate}[label={(\roman*)}]
        \item \cite[Lemma 4.3]{brandes_hasse_2021} with $\rho = 1$, and
        \item the analogue of \cite[Lemma 4.4]{brandes_hasse_2021} using \labelcref{item:S_bound}.
    \end{enumerate}
\end{proof}

\section{Proof of \texorpdfstring{\cref{thm:main}}{the theorem}}\label{sec:proof}
\begin{proof}[Proof of \cref{thm:main}]
    Assume that $s > s_*$. After \cref{eq:only_major_left}, the only task remaining is to evaluate the integral over $\Na(\eta)$ for some sufficiently small $\eta$. Using \cref{eq:rewrite_major_int_2}, we can write the integral over the enlarged major arcs in terms of $\Series(c'X^\omega)$ and $\Jintegral(c'X^{\omega})$. Since \cref{lemma:J,lemma:S} guarantee the convergence that we need, and since the restrictions on $s$ imply that the parameter $\kappa$ can be chosen to satisfy \cref{eq:kappa_n_2}, we get
    \begin{equation*}
        N_{F, G}(X) = X^{s - k-d } \Jintegral \Series + O \of{X^{s - k-d - \nu}}
    \end{equation*}
    for some $\nu > 0$. 

    It remains to identify the constant and discuss its positivity. Since $\Jintegral(R)$ is the integral of $J$ over the box $[-R, R]^2$, we have $\Jintegral = \chi_{\infty}$ by definition. Next, the arguments of \cite[Theorem 2.4]{vaughan_hardy-littlewood_1997} show that $\Series = \prod_p \chi_p$, and that there is a positive number $p_0$ such that
    \begin{equation*}
        \frac{1}{2} < \prod_{p > p_0} \chi_p < \frac{3}{2}.
    \end{equation*}
    Since $F$ and $G$ are homogeneous, if $\xu$ is a non-singular solution of the system, then so is $\lambda \xu$ for every $\lambda \neq 0$, so that a non-singular solution in $\Q_p^s$ yields one in $\Z_p^s$, and one in $\R^s$ similarly yields one in $[-1, 1]^s$. If the system has a non-singular solution in $\Z_p^s$, then Hensel's lemma shows that $\Gamma(p^h) \gg p^{h(s-2)}$ for all large $h$, whence $\chi_p > 0$. If this holds for every prime $p$, then the above bound for the tail of the product gives $\Series > 0$. Likewise, the arguments of \cite[Lemma 2 and \S 11]{schmidt_simultaneous_1982} show that $\Jintegral > 0$ if the system has a non-singular solution in $[-1, 1]^s$.
\end{proof}

\printbibliography

\end{document}

%% file: preamble/packages.tex
\documentclass[a4paper]{amsart}
\usepackage[top=3cm, bottom=3cm,inner=3cm, outer=3cm]{geometry}	
\usepackage[british]{babel} 
\usepackage{csquotes}

\usepackage{xcolor} 
\usepackage{graphicx}
\usepackage{amsmath, amssymb}
\usepackage{amsfonts} 
\usepackage{amsthm} 
\usepackage{hyperref}
\usepackage[capitalise]{cleveref}
\usepackage{bbm,bm}

\newtheorem{theorem}{Theorem}[section] 
\newtheorem{lemma}[theorem]{Lemma} 
\newtheorem{corollary}[theorem]{Corollary}

\theoremstyle{definition}
\newtheorem{definition}{Definition}[section]
\newtheorem{remark}[theorem]{Remark}
\numberwithin{equation}{section}
\newtheorem{hypothesis}{Hypothesis}
\crefname{hypothesis}{Hypothesis}{Hypotheses}

\crefname{equation}{}{}
\creflabelformat{equation}{\textup{(#2#1#3)}}

\usepackage{enumitem}
\setlist{topsep=4pt,itemsep=1pt,partopsep=1pt,parsep=2pt}

%% file: preamble/macros.tex
\newcommand*\diff{\mathop{}\!\mathrm{d}}
\newcommand{\Mod}[1]{\, (\mathrm{mod} \, #1)}
\newcommand{\eqdef}{: =}

\newcommand{\SL}{\mathrm{SL}}

\newcommand{\C}{\mathbb{C}}
\newcommand{\Constant}{\mathcal{C}}

\newcommand{\Ft}{\tilde{F}}

\newcommand{\Gt}{\tilde{G}}

\newcommand{\h}{\mathbf{h}}

\newcommand{\Jintegral}{\mathfrak{J}}

\newcommand{\mi}{\mathfrak{m}}
\newcommand{\Ma}{\mathfrak{M}}

\newcommand{\N}{\mathbb{N}}
\newcommand{\nn}{\mathfrak{n}}
\newcommand{\Na}{\mathfrak{N}}

\newcommand{\Pa}{\mathfrak{P}}

\newcommand{\psit}{\tilde{\psi}}

\newcommand{\Q}{\mathbb{Q}}

\newcommand{\R}{\mathbb{R}}

\newcommand{\Series}{\mathfrak{S}}

\newcommand{\T}{\mathbb{T}}
\newcommand{\Tt}{\tilde{T}}

\newcommand{\w}{\mathbf{w}}

\newcommand{\x}{\mathbf{x}}

\newcommand{\xu}{\underline{\x}}

\newcommand{\y}{\mathbf{y}}
\newcommand{\yu}{\underline{\y}}

\newcommand{\z}{\mathbf{z}}
\newcommand{\Z}{\mathbb{Z}}
\newcommand{\Zero}{\mathcal{Z}}

\let\eps\relax
\newcommand{\eps}{\varepsilon}

\newcommand{\norm}[1]{\left\| #1 \right\|}

\newcommand{\set}[1]{\left\{ #1 \right\}}
\newcommand{\of}[1]{\left( #1 \right)}

\newcommand{\abs}[1]{\left| #1 \right|}
\newcommand{\floor}[1]{\left\lfloor #1 \right\rfloor}

%% file: bibliography.bib
@article{wooley_nested_2019,
	title = {Nested efficient congruencing and relatives of {Vinogradov}'s mean value theorem},
	volume = {118},
	doi = {10.1112/plms.12204},
	number = {4},
	journal = {Proceedings of the London Mathematical Society. Third Series},
	author = {Wooley, T. D.},
	year = {2019},
	mrnumber = {3938716},
	pages = {942--1016},
}

@article{wooley_rational_2015,
	title = {Rational solutions of pairs of diagonal equations, one cubic and one quadratic},
	volume = {110},
	copyright = {http://doi.wiley.com/10.1002/tdm\_license\_1.1},
	doi = {10.1112/plms/pdu054},
	number = {2},
	journal = {Proceedings of the London Mathematical Society},
	author = {Wooley, T. D.},
	year = {2015},
	pages = {325--356},
}

@book{vaughan_hardy-littlewood_1997,
	address = {Cambridge},
	edition = {2},
	series = {Cambridge {Tracts} {In} {Mathematics}},
	title = {The {Hardy}-{Littlewood} method},
	isbn = {978-0-521-57347-4},
	number = {125},
	publisher = {Cambridge Univ. Press},
	author = {Vaughan, R. C.},
	year = {1997},
}

@incollection{schmidt_simultaneous_1982,
	series = {Progress in {Mathematics}},
	title = {Simultaneous rational zeros of quadratic forms},
	volume = {22},
	booktitle = {Séminaire {Delange}-{Pisot}-{Poitou} ({Théorie} des {Nombres}), {Paris} 1980--1981},
	publisher = {Birkhäuser},
	author = {Schmidt, W. M.},
	year = {1982},
	pages = {281--307},
}

@article{browning_forms_2017,
	title = {Forms in many variables and differing degrees},
	volume = {19},
	doi = {10.4171/jems/668},
	number = {2},
	journal = {Journal of the European Mathematical Society},
	author = {Browning, T. D. and Heath-Brown, D. R.},
	year = {2017},
	pages = {357--394},
}

@article{browning_rational_2015,
	title = {Rational points on intersections of cubic and quadric hypersurfaces},
	volume = {14},
	doi = {10.1017/S1474748014000127},
	number = {4},
	journal = {Journal of the Institute of Mathematics of Jussieu},
	author = {Browning, T. D. and Dietmann, R. and Heath-Brown, D. R.},
	year = {2015},
	pages = {703--749},
}

@article{brandes_hasse_2021,
	title = {The {Hasse} principle for diagonal forms restricted to lower-degree hypersurfaces},
	volume = {15},
	doi = {10.2140/ant.2021.15.2289},
	number = {9},
	journal = {Algebra \& Number Theory},
	author = {Brandes, J. and Parsell, S. T.},
	year = {2021},
	note = {Publisher: MSP},
	pages = {2289--2314},
}

@unpublished{brandes_parsell_corrigendum,
  author   = {Brandes, Julia and Parsell, Scott T.},
  title    = {Corrigendum: The {H}asse principle for diagonal forms restricted to lower-degree hypersurfaces},
  note     = {To appear in Algebra \& Number Theory}
}

@misc{wooley_main_2026,
	title = {The main conjecture for a {Vinogradov} subsystem},
	doi = {10.48550/arXiv.2610.05515},
	publisher = {arXiv},
	author = {Wooley, T. D.},
	year = {2026},
	note = {arXiv:2610.05515 [math.NT]},
}

@misc{theorin_johansson_hasse_2026,
	title = {The {Hasse} principle for diagonal forms restricted to a hypersurface of adjacent degree},
	doi = {10.48550/arXiv.2601.18581},
	publisher = {arXiv},
	author = {Theorin Johansson, A.},
	year = {2026},
	note = {arXiv:2601.18581 [math.NT]},
}

@article{wooley_weyls_1999,
	title = {On {Weyl}'s inequality, {Hua}'s lemma, and exponential sums over binary forms},
	volume = {100},
	doi = {10.1215/S0012-7094-99-10014-7},
	number = {3},
	journal = {Duke Mathematical Journal},
	author = {Wooley, T. D.},
	year = {1999},
	pages = {373--423},
}

@article{brandes_forms_2014,
	title = {Forms representing forms and linear spaces on hypersurfaces},
	volume = {108},
	doi = {10.1112/plms/pdt044},
	number = {4},
	journal = {Proceedings of the London Mathematical Society},
	author = {Brandes, J.},
	year = {2014},
	pages = {809--835},
}
